\pdfoutput=1
\documentclass{amsart}

\usepackage[utf8]{inputenc}
\usepackage{color}
\usepackage[dvipsnames]{xcolor}
\usepackage{hyperref}
\usepackage{amssymb}
\usepackage{float}
\usepackage[nameinlink, capitalize, noabbrev]{cleveref}
\usepackage{tikz}
\usepackage{forest}
\usepackage{comment}
\usepackage{nicefrac}
\usepackage{enumerate}
\usepackage{booktabs}
\usetikzlibrary{shapes.geometric,decorations.markings}
\usetikzlibrary{decorations.pathreplacing}
\usetikzlibrary{fit}
\usetikzlibrary{positioning}
\tikzset{mytext/.style={font=\small, text=black}}
\hypersetup{
    colorlinks=true,
    linkcolor=teal,
    citecolor=magenta,
    }

\newtheorem{Theorem}{Theorem}

\newtheorem{conjecture}{Conjecture}
\newtheorem{proposition}{Proposition}[section]
\newtheorem{lemma}[proposition]{Lemma}

\newtheorem{theorem}[proposition]{Theorem}
\newtheorem{question}[conjecture]{Question}
\theoremstyle{definition}

\newcommand{\st}{\text{St}}

\newcommand{\Aut}{\text{Aut}}

\newcommand{\T}{\mathcal{T}}

\numberwithin{equation}{section}

\title[Generalized Brunner-Sidki-Vieira Groups]{The Hausdorff dimension of the generalized Brunner-Sidki-Vieira Groups}
\author{Jorge Fariña-Asategui and Mikel E. Garciarena}

\address{Jorge Fariña-Asategui: Centre for Mathematical Sciences, Lund University, 223 62 Lund, Sweden -- Department of Mathematics, University of the Basque Country UPV/EHU, 48080 Bilbao, Spain}
\email{jorge.farina\_asategui@math.lu.se}

\address{Mikel E. Garciarena: Department of Mathematics, University of Salerno, 84084 Fisciano (SA), Italy --  Department of Mathematics, University of the Basque Country UPV/EHU, 48080 Bilbao, Spain}
\email{mgarciarenaperez@unisa.it}

\keywords{Self-similar groups, weakly regular branch groups, Hausdorff dimension}
\subjclass[2020]{Primary: 20E08, 28A78; Secondary: 20E18}
\thanks{Both authors are supported by the Spanish Government, grant PID2020-117281GB-I00, partly with FEDER funds, and by the program ``Oberwolfach Research Fellows" by the Mathematisches Forschungsinstitut Oberwolfach in 2024. The first author also acknowledges support from the Walter Gyllenberg Foundation from the Royal Physiographic Society of Lund and the second author also acknowledges support from the “National Group for Algebraic and Geometric Structures, and their Applications” (GNSAGA - INdAM)}

\begin{document}
\maketitle

\begin{abstract}
	We compute the Hausdorff dimension of the closure of the generali-zed Brunner-Sidki-Vieira group acting on the $m$-adic tree for $m\ge 2$, providing the first examples of self-similar topologically finitely generated closed subgroups of transcendental Hausdorff dimension in the group of $m$-adic automorphisms.
\end{abstract}

\section{introduction}

Groups acting on regular rooted trees provide easy geometrical examples of groups with interesting properties, such as infinite finitely generated periodic groups \cite{GrigorchukBurnside} or groups of intermediate growth \cite{GrigorchukIntermediate}. The action of a group on a rooted tree can be described by looking at its action on every level of the tree. Therefore the automorphism group of a rooted tree is a profinite group with respect to the natural topology given by the \textit{level stabilizers} $\{\mathrm{St}(n)\}_{n\ge 1}$, where $\mathrm{St}(n)$ is the normal subgroup of automorphisms fixing every vertex at the $n$th level. The study of the Hausdorff dimension on profinite groups has generated a lot of interest in the recent years; see \cite{PadicAnalytic} for an overview and see \cite{IkerBenjamin, IkerAnitha1, IkerAnitha2, Jorge, OhianaAlejandraBenjamin, AndoniJon} for more recent work. It started with the work of Abercrombie \cite{Abercrombie} and with Barnea and Shalev \cite{BarneaShalev} proving that the Hausdorff dimension of a closed subgroup in a profinite group coincides with its lower box dimension. For $m\ge 2$, recall that the group of \textit{$m$-adic automorphisms} $\Gamma_m$ is defined as the group consisting of automorphisms whose local actions are given by powers of the cycle $(1\,\dotsb\, m)\in \mathrm{Sym}(m)$. For $G\le \Gamma_m$, the Hausdorff dimension of the closure of $G$ in $\Gamma_m$ is given by
$$\mathrm{hdim}_{\Gamma_m}(\overline{G})=\liminf_{n\to \infty}\frac{\log_m|G:\mathrm{St}_G(n)|}{\log_m|\Gamma_m:\mathrm{St}_{\Gamma_m}(n)|},$$
where $\st_G(n)$ and $\st_{\Gamma_m}(n)$ stand for $\st(n)\cap G$ and $\st(n)\cap\Gamma_m$ respectively.\\

Self-similar and branching groups are of particular interest as they appear naturally in the study of groups acting on regular rooted trees; see \cref{section: Preliminaries} for detailed definitions. Recent work of the first author \cite{Jorge} has shown that for both self-similar groups and branching groups the above lower limit is a proper limit and the Hausdorff dimension of the closure of such a group $G\le \Gamma_m$ is given by 
$$\mathrm{hdim}_{\Gamma_m}(\overline{G})=\log_m|G:\mathrm{St}_G(1)|-S_G(1/m),$$
where $S_G(x):=\sum_{n\ge 1}s_nx^n$ is the ordinary generating function of the sequence
$$s_n:=m\log_m|\mathrm{St}_G(n-1):\mathrm{St}_G(n)|-\log_m|\mathrm{St}_G(n):\mathrm{St}_G(n+1)|.$$

In this setting, any $\gamma\in [0,1]$ can be realized as the Hausdorff dimension of a topologically finitely generated group by the celebrated result of Abért and Virág; see \cite[Theorem~2]{AbertVirag} and see \cite{FGspectrum} for a constructive proof. The first author has proved that the same is true for closed self-similar groups \cite[Theorem A]{Jorge}. However, very few results are known on the Hausdorff dimension of both self-similar and topologically finitely generated closed groups. Indeed the only value which is known not to arise as the Hausdorff dimension of a self-similar topologically finitely generated closed group is~1; see \cite[Theorem 1.3]{Jorge}. Thus a first step is to understand the Hausdorff dimension of the closure of explicit examples of self-similar finitely generated groups.

For some families of self-similar groups the Hausdorff dimension of their closures has already been computed. Notable examples include the first and the second Grigorchuk group \cite{GrigorchukJustInfinite,Noce}, the Grigorchuk-Gupta-Sidki groups acting on the $p$-adic tree \cite{GGSHausdorff}, the Hanoi towers group \cite{Hanoi} and its generalizations \cite{Hanoi2} and the Basilica group \cite{BartholdiHausdorff} and its different generalizations \cite{pBasilica,GeneralizedBasilica}. 

In this paper we study the Brunner-Sidki-Vieira group $H$ introduced over the binary rooted tree in \cite{BSV1} and generalized to the $m$-adic tree in \cite{BSV2}. The group $H$ is generated by the automorphisms $a$ and $b$ defined recursively as $\psi(a)=(1,\dotsc,1,a)\sigma$ and $\psi(b)=(1,\dotsc,1,b^{-1})\sigma$ for $\sigma:=(1\,2\,\dotsb\, m)\in \mathrm{Sym}(m)$; see \cref{section: Preliminaries} for further details. The group $H$ satisfies many structural properties, such as being self-similar, super strongly fractal, level-transitive and saturated, as shown in \cref{proposition: self-similar etc m}. Its automorphism group is also just equal to its normalizer in the full automorphism group; see \cref{proposition: automorphisms of H}.

The Hausdorff dimension of the closure in $\Gamma_2$ of the Brunner-Sidki-Vieira group acting on the binary rooted tree  is known to be 1/3; see \cite[Example~2.4.5]{BartholdiHausdorff}. For $m\ge 3$, it was never computed. We close this gap here.

 \begin{Theorem}
 \label{Theorem: Hausdorff dimension and congruence orders for m}
	Let $H$ be the generalized Brunner-Sidki-Vieira group acting on the $m$-adic tree. Then the Hausdorff dimension of its closure in $\Gamma_m$ is
	\begin{align*}
        \mathrm{hdim}_{\Gamma_m}(\overline{H})=
        \frac{m-\tau(m-1)\log_m 2}{m+1},
\end{align*}
where the parameter $\tau$ is defined as
\begin{align}
\label{align: definition of tau}
    \tau:=\begin{cases}
        0& \text{if }m \text{ is odd},\\
        1& \text{if }m \text{ is even}.
    \end{cases}
\end{align} 
\end{Theorem}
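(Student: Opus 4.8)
The plan is to feed the concrete structure of $H$ into the generating-function formula recalled above from \cite{Jorge}, after first extracting a purely formal simplification of it. Writing $d_n:=\log_m|\st_H(n):\st_H(n+1)|$, the numbers $s_n=md_{n-1}-d_n$ telescope, so that $\sum_{n=1}^{N}s_nm^{-n}=d_0-d_Nm^{-N}$ with $d_0=\log_m|H:\st_H(1)|$. Taking $N\to\infty$ this gives
\[
\mathrm{hdim}_{\Gamma_m}(\overline H)=d_0-S_H(1/m)=\lim_{n\to\infty}\frac{\log_m|\st_H(n):\st_H(n+1)|}{m^n},
\]
so the entire problem reduces to determining the growth rate of the section-stabilizer indices. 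Since both generators act as $\sigma$, one has $|H:\st_H(1)|=m$, hence $d_0=1$.

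Next I would analyse the first level. Applying Reidemeister--Schreier with the transversal $\{1,a,\dots,a^{m-1}\}$ presents $\st_H(1)$ by $a^m$ together with the conjugates $a^i(ba^{-1})a^{-i}$ for $0\le i\le m-1$. Computing $\psi$ on these generators, using $\psi(a^m)=(a,\dots,a)$ and $\psi(ba^{-1})=(1,\dots,1,(ab)^{-1})$, shows that
\[
\psi(\st_H(1))=K^{(m)}\rtimes\langle(a,\dots,a)\rangle,\qquad K:=\langle\langle ab\rangle\rangle_H .
\]
The crucial simplification is the identity $b\,(ab)^{-1}\,b^{-1}=a^{-1}(ab)^{-1}a$, which says that conjugation by $b$ agrees with conjugation by $a^{-1}$ on $K$; consequently $K$ is generated by the $a$-conjugates of $ab$ alone, each such conjugate arises as a single-coordinate section, and therefore $K^{(m)}\le\psi(\st_H(1))$. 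This is the self-similar branch datum (underlying \cref{proposition: self-similar etc m}) that I will iterate.

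The arithmetic of the theorem enters through the image of $K$ in the first-level quotient. Since $ab$ acts as $\sigma^2$, the image of $K$ in $H/\st_H(1)\cong\mathbb Z/m$ is $\langle 2\rangle$, of index $\gcd(2,m)=1+\tau$; this index $2$ for even $m$ is the sole source of a factor that is not a power of $m$, and it is precisely what produces the term $\log_m 2$. I would then propagate the branch structure down the tree: writing $\pi_n$ for the reduction $\st_H(n)\to\st_H(n)/\st_H(n+1)$ recording the level-$n$ local actions, I intersect $\psi(\st_H(1))=K^{(m)}\rtimes\langle(a,\dots,a)\rangle$ with $\st_H(n-1)^{(m)}$ and project coordinatewise. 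This yields a recursion of the shape $d_n=d_{n-1}+(m-1)u_{n-1}$, where $u_n:=\log_m|\pi_n(\st_K(n))|$ with $\st_K(n)=K\cap\st_H(n)$, together with a parallel recursion for the $u_n$ into which the index $1+\tau$ is fed at the bottom.

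The main obstacle is the bookkeeping in this last step, namely controlling how the factor $2$ coming from $K$ interacts with the powers of $m$ along the recursion. Concretely I must track the homomorphism sending an element of $K$ to the $a$-degree of the single-coordinate section realising it — whose image I expect to be $2\mathbb Z$ — against the divisibility by $m^n$ forced by membership in $\st_H(n)$; these two effects decouple cleanly exactly when $m$ is odd, which is why the odd case collapses to $m/(m+1)$. Establishing the supporting facts — that $\psi$ is injective on $\st_H(1)$, that $a$ has order exactly $m^n$ modulo $\st_H(n)$, and that the quotients $\st_H(n)/\st_K(n)\st_H(n+1)$ are the predicted cyclic groups — will let me solve the recursion in closed form for $\log_m|\st_H(n):\st_H(n+1)|$. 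Passing to the limit then gives $\lim_n d_n/m^n=\frac{m-\tau(m-1)\log_m 2}{m+1}$, and I would finish by verifying that the specialisation $m=2$ recovers the known value $1/3$ of \cite{BartholdiHausdorff}.
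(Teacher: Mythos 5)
Your reduction is sound and your skeleton is broadly compatible with the paper's, though you package it differently: you telescope the generating-function formula to get $\mathrm{hdim}_{\Gamma_m}(\overline H)=\lim_n \log_m|\mathrm{St}_H(n):\mathrm{St}_H(n+1)|/m^n$ and work with $H$ and the branching subgroup $K=\langle\langle ab\rangle\rangle_H$ directly, whereas the paper first passes to $H'$ (\cref{proposition: G and G' same hausdorff dimension}), uses the branching structure $L_2=\psi^{-1}(H'\times\cdots\times H')\le H'$, and computes $s_n$ from the quotients $H'/L_2\mathrm{St}_{H'}(n)$, which \cref{Lemma: quotient direct product of finite cyclic groups} reduces to the order of $ab$ modulo $H'\mathrm{St}_H(n-1)$. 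Your identification of the source of the $\log_m2$ term — the image of $K$ in $H/\mathrm{St}_H(1)\cong\mathbb{Z}/m$ being $\langle 2\rangle$ of index $\gcd(2,m)$ — is exactly right, your Reidemeister--Schreier computation of $\mathrm{St}_H(1)$ is correct, and the decomposition $\psi(\mathrm{St}_H(1))=K^{(m)}\rtimes\langle(a,\dots,a)\rangle$ does hold (though your justification that $K$ is generated by the $a$-conjugates of $ab$ is slightly off: the identity only shows conjugation by $b$ and by $a^{-1}$ agree \emph{on the element} $ab$, not on all of $K$; the clean argument is that $\langle (ab)^{a^i}:i\in\mathbb{Z}\rangle$ is $a$-invariant and together with $a$ generates $H$, hence is normal).

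The genuine gap is that the entire quantitative core of the proof is deferred under the heading ``bookkeeping''. What you must actually prove is the exact order of $ab$ (and of $\lambda=ab^{-1}$) modulo $H'\mathrm{St}_H(n)$ for every $n$ — equivalently, the exact solution of your recursion for $u_n$ — and the hard direction is the lower bound: showing that $(ab)^{jm^k}$, $\lambda^{jm^k}$, and for even $m$ the half-powers $(ab)^{m^{k+1}/2}$, $\lambda^{m^{k+1}/2}$, do \emph{not} lie in the next congruence subgroup. This is \cref{proposition: orders of elements in abelianization m} in the paper, and it is not routine: it requires the precise structure of $H'$ from \cref{proposition: branch structure BSV m} (the free abelian rank-$(m-1)$ complement $\langle[\lambda,a]^{a^i}\rangle$ to $L_2$ and the constraint that the coordinates of $\psi(H')$ multiply into $H'$), an analysis of which residue vectors lie in the image of the circulant-type matrix $A$ modulo $m^k$, $m^{k+1}/2$, etc., and a double induction alternating between statements about $ab$ and about $\lambda$. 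Moreover, your claim that the two effects ``decouple cleanly exactly when $m$ is odd'' hides precisely the place where the even case does \emph{not} decouple: in the paper's proof of (Q1)/(R1) the exceptional exponents $\ell\equiv m^k/2$ cannot be excluded by the inductive hypothesis alone and require a separate contradiction (the vector $(\varepsilon,-\varepsilon,\dots,\varepsilon,\delta-\varepsilon)$ failing to lie in the image of $A$ modulo $m^k/2$). Without carrying out this induction — or an equivalent determination of the quotients $\mathrm{St}_H(n)/\mathrm{St}_K(n)\mathrm{St}_H(n+1)$ you merely predict to be cyclic of the right order — the proposal establishes the answer only up to the unproven assertion that no extra collapsing occurs at deeper levels, which is the whole content of the theorem.
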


For $m$ odd the Hausdorff dimension of the closure of the generalized Brunner-Sidki-Vieira group coincides with the Hausdorff dimension of the closure of the second Basilica group of the odometer; for $m=p$ a prime see \cite[Theorem C]{pBasilica} and for any odd~$m$ see \cite[Theorem~1.7]{GeneralizedBasilica}. However, for $m$ even and not a power of~2 this value is transcendental by the Gelfond-Schneider Theorem; see the discussion at the end of \cref{section: Hausdorff dimension}. Therefore we obtain the first examples in the literature of self-similar topologically finitely generated closed subgroups of $\Gamma_m$ with transcendental Hausdorff dimension. Note that irrationality of the Hausdorff dimension comes from the logarithms themselves, as in the case of the Hanoi towers group \cite{Hanoi}. This is fundamentally different from the self-similar not topologically finitely generated closed groups constructed in \cite{Jorge}, where the irrationality of the Hausdorff dimension is due to the sequence $\{s_n\}_{n\ge 1}$ not becoming eventually periodic. Even if the Hausdorff dimension of self-similar topologically finitely generated closed subgroups may not be rational all the studied examples have eventually periodic sequences $\{s_n\}_{n\ge 1}$ and, therefore, their generating functions $S_G(x)$ are indeed rational functions. Hence we ask whether this is a general trait of self-similar topologically finitely generated closed groups.

\begin{question}
\label{question: rational generating function}
For a self-similar topologically finitely generated closed subgroup $G$ is $S_G(x)$ always a rational function?
\end{question}

\subsection*{\textit{\textmd{Organization}}} 
In \cref{section: Preliminaries} we give the basic definitions and results needed in the subsequent sections. We also define the generalized Brunner-Sidki-Vieira group and give its first structural properties. In \cref{section: Hausdorff dimension} we compute the Hausdorff dimension of the closure of this group, proving \cref{Theorem: Hausdorff dimension and congruence orders for m}. 

\subsection*{\textit{\textmd{Notation}}} 
The integer $m\ge 2$ will be used to denote the degree of an arbitrary regular rooted tree. For a normal subgroup $N\le G$ we shall use the notation $g\equiv_N h$ for $gh^{-1}\in N$. For two elements $f,g$ of some group $G$ we use the notation $f^{-g}:=(f^g)^{-1}$.

\subsection*{Acknowledgements} We would like to thank Marialaura Noce and Anitha Thillai-sundaram for suggesting the topic and for very helpful discussions. We would also like to thank Gustavo A. Fernández-Alcober for providing valuable feedback on the writing of the present paper.

\section{Preliminaries and the generalized Brunner-Sidki-Vieira group}
\label{section: Preliminaries}

\subsection{The full automorphism group} For $m\ge 2$ we define the $m$-\textit{adic tree} $\mathcal{T}$ as the infinite regular rooted tree where each vertex has exactly $m$ descendants. Each vertex can be identified with a finite word in letters from the set $X=\{1,\dotsc,m\}$. Two vertices $u$ and $v$ are joined by an edge if $u=vx$ with $x\in X$. In this case, we say that $u$ is \textit{below} $v$. The \textit{$n$th level} of the tree is defined to be the set of all vertices that consist of words of length $n$, and the $n$th \textit{truncated tree} $\mathcal{T}_n$ will be the tree consisting of words of length at most $n$. We also use the word level to refer to the number~$n$. Let $\mathrm{Aut}~\mathcal{T}$ be the group of graph automorphisms of the $m$-adic tree, i.e. those bijective maps on the set of vertices of the tree preserving adjacency. Since we are considering regular rooted trees such automorphisms fix the root and permute the vertices at the same level of the tree. 

Let $g\in{\mathrm{Aut}~\mathcal{T}}$. For any vertex $v$ and any integer $1\le k\le \infty$, we define~the~\emph{section of $g$ at $v$ of depth }$k$ as the unique automorphism $g|_v^k$, of the truncated tree $\T_k$ (we fix $\mathcal{T}_\infty=\mathcal{T}$ for $k=\infty$), such that $(vu)g=(v)g\cdot(u)g|_v^k$ for every $u$ of length at most $k$, for finite $k$, and of arbitrary length for $k=\infty$. For $k=\infty$ we write  $g|_v$ for simplicity and we will refer to it as \textit{section of $g$ at $v$}  and for $k=1$ we call it the \textit{label of $g$ at $v$}.

For every $n\ge 1$ the subset $\mathrm{St}(n)$ of automorphisms fixing the $n$th level of~the tree pointwise forms a normal subgroup of $\Aut~\T$ of finite index called the \textit{$n$th level stabilizer}. Similarly, for any vertex $v$ of the tree we can define its \textit{vertex stabilizer} $\mathrm{st}(v)$ as the subgroup of automorphisms fixing the vertex $v$. For each vertex $v$, we shall define the continuous homomorphism $\psi_v:\mathrm{st}(v)\to {\mathrm{Aut}~\mathcal{T}}$ via $g\mapsto g|_v$ and for each integer $n\ge 1$ we define the continuous isomorphisms $\psi_n:\mathrm{St}(n)\to {\mathrm{Aut}~\mathcal{T}}\times\dotsb\times {\mathrm{Aut}~\mathcal{T}}$ via $g\mapsto (g|_{v_1},\dotsc,g|_{v_{m^n}})$, where $v_1,\dotsc,v_{m^n}$ denote the vertices at the $n$th level of the tree from left to right. We shall write $\psi:=\psi_1$ for simplicity. Note that $\psi$ extends to an isomorphism $\psi:\mathrm{Aut}~\mathcal{T}\to  {\mathrm{Aut}~\mathcal{T}}\wr \mathrm{Sym}(m)$ where $\mathrm{Sym}(m)$ is the symmetric group on $m$ symbols. We shall use $\psi$ to define automorphisms of the $m$-adic tree recursively.

\subsection{Subgroups of $\mathrm{Aut}~\mathcal{T}$}
Let $G$ be a subgroup of ${\mathrm{Aut}~\mathcal{T}}$. We define the subgroups $\mathrm{st}_G(v):=\mathrm{st}(v)\cap G$ and $\mathrm{St}_G(n):=\mathrm{St}(n)\cap G$ for any vertex $v$ and level $n\ge 1$ respectively. We say that $G$ is \textit{self-similar} if for any $g\in G$ we have $g|_{v}\in G$ for any vertex $v$ of the tree. We shall say that $G$ is \textit{fractal} (or \textit{strongly fractal}) if $G$ is self-similar and $\psi_v(\mathrm{st}_G(v))=G$ (respectively $\psi_v(\mathrm{St}_G(1))=G$) for any $v$ at the first level of the tree. An even stronger version of fractality is \textit{super strongly fractal}, where $\psi_v(\mathrm{St}_G(n))=G$ for every vertex $v$ at the $n$th level of the tree for every level $n\ge 1$; see \cite{JoneFractal} for a detailed discussion on the different concepts of fractality.

We say that $G$ is \textit{level-transitive} if $G$ acts transitively on every level of the tree. For any vertex $v$ let $\mathrm{rist}_G(v)$ be the associated \textit{ rigid vertex stabilizer}, i.e. the subgroup of $\mathrm{st}_G(v)$ consisting of automorphisms $g\in \mathrm{st}_G(v)$ only acting non-trivially on the vertices in the subtree rooted at $v$. Then for any $n\ge 1$ we define the corresponding \textit{rigid level stabilizers} $\mathrm{Rist}_G(n)$ as the product of all rigid vertex stabilizers of the vertices at level~$n$. We say that $G$ is \textit{weakly branch} if $G$ is level-transitive and its rigid level stabilizers are non-trivial for every level, and \textit{branch} if they are furthermore of finite index in $G$. \textit{Weakly regular branch} and \textit{regular branch} are defined as $G$ being a self-similar, level-transitive group containing a \textit{branching} subgroup $K$, i.e. a subgroup such that $\psi(\mathrm{St}_K(1))\ge K\times\dotsb\times K$, where $K$ is assumed to be non-trivial and of finite index respectively. In general, a group $K$ satisfying the assumption $\psi(\mathrm{St}_K(1))\ge K\times\dotsb\times K$ is called a \textit{branching} group.

Finally, a group $G\le \mathrm{Aut}~\mathcal{T}$ is said to be \textit{saturated} if for any positive integer $n$ there exists a characteristic subgroup $G_n\le\mathrm{St}_G(n)$ such that $G_n$ is level-transitive on every subtree rooted at the $n$th level.

\subsection{Hausdorff dimension}
Countably based profinite groups can be endowed with a metric which allows the definition of Hausdorff measures and therefore a Hausdorff dimension for its subgroups. It was proved by Barnea and Shalev in~\cite{BarneaShalev} based on the work of Abercrombie \cite{Abercrombie} that for a profinite group the Hausdorff dimension of its closed subgroups coincides with their lower box dimension. The group ${\mathrm{Aut}~\mathcal{T}}$ is residually finite as $\bigcap \mathrm{St}(n)=1$. What is more ${\mathrm{Aut}~\mathcal{T}}$ is a countably based profinite group with respect to the topology induced by this filtration of the level stabilizers. For a group $G$ such that $g|_v^1\in H\le \mathrm{Sym}(m)$ for every element $g\in G$ and every vertex~$v$ of the tree, the Hausdorff dimension of $\overline{G}$ may be computed relative to the \textit{iterated permutational wreath product} $W_H:=\varprojlim_{n\ge 1} H\wr \overset{n}{\dotsb}\wr H$ as
$$\mathrm{hdim}_{W_H}(\overline{G})=\liminf_{n\to \infty}\frac{\log_m|G:\mathrm{St}_G(n)|}{\log_m|W_H:\mathrm{St}_{W_H}(n)|}.$$
The following theorem was proven by the first author in \cite{Jorge}.

\begin{theorem}[{see \cite[Theorem B]{Jorge}}]
If $G\le \Gamma_m$ is either self-similar or branching then the Hausdorff dimension of the closure of $G$ in $\Gamma_m$ is
$$\mathrm{hdim}_{\Gamma_m}(\overline{G})=\log_m|G:\mathrm{St}_G(1)|-S_G(1/m),$$
where $S_G(x):=\sum_{n\ge 1}s_nx^n$ is the ordinary generating function of the sequence
$$s_n:=m\log_m|\mathrm{St}_G(n-1):\mathrm{St}_G(n)|-\log_m|\mathrm{St}_G(n):\mathrm{St}_G(n+1)|.$$
\end{theorem}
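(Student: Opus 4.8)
The plan is to start from the liminf expression for the Hausdorff dimension recorded in the excerpt, namely
$$\mathrm{hdim}_{\Gamma_m}(\overline G)=\liminf_{n\to\infty}\frac{\log_m|G:\mathrm{St}_G(n)|}{\log_m|\Gamma_m:\mathrm{St}_{\Gamma_m}(n)|},$$
and to reduce it to the closed form in the statement by elementary asymptotics, the structural hypothesis entering only through a single monotonicity estimate. Write $a_n:=\log_m|\mathrm{St}_G(n-1):\mathrm{St}_G(n)|$, so that $\log_m|G:\mathrm{St}_G(n)|=\sum_{k=1}^n a_k=:L_n$ and $s_n=m a_n-a_{n+1}$, and set $b_n:=a_n/m^{n-1}$. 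Because the labels in $\Gamma_m$ lie in the cyclic group $\langle(1\,\dotsb\,m)\rangle$ of order $m$, the quotient $\mathrm{St}_G(n-1)/\mathrm{St}_G(n)$ embeds into a product of $m^{n-1}$ copies of it, so $0\le a_n\le m^{n-1}$ and hence $b_n\in[0,1]$; the same count gives $\log_m|\Gamma_m:\mathrm{St}_{\Gamma_m}(n)|=(m^n-1)/(m-1)=:D_n$, whence $D_{n+1}-D_n=m^n$.

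First I would record the purely formal telescoping identity
$$\sum_{k=1}^N\frac{s_k}{m^k}=\sum_{k=1}^N\frac{a_k}{m^{k-1}}-\sum_{k=1}^N\frac{a_{k+1}}{m^{k}}=a_1-\frac{a_{N+1}}{m^N}=a_1-b_{N+1},$$
valid for every $N$. Thus $S_G(1/m)$ converges precisely when $b_n$ does, in which case $S_G(1/m)=a_1-\lim_n b_n$, where $a_1=\log_m|G:\mathrm{St}_G(1)|$. The remaining work splits into (i) showing that $b_n$ has a limit $b$, and (ii) identifying $\liminf_n L_n/D_n$ with that same $b$. For (ii) I would invoke the Stolz--Cesàro theorem in the $\infty/\infty$ form: $D_n$ is strictly increasing and unbounded, and
$$\frac{L_{n+1}-L_n}{D_{n+1}-D_n}=\frac{a_{n+1}}{m^n}=b_{n+1}\to b,$$
which forces $L_n/D_n\to b$. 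In particular the liminf is a genuine limit and equals $b=a_1-S_G(1/m)$, as claimed.

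Everything therefore hinges on step (i), and this is where the self-similar or branching hypothesis is used; I expect it to be the \emph{main obstacle}. The claim in the self-similar case is the index comparison
$$|\mathrm{St}_G(n):\mathrm{St}_G(n+1)|\ \le\ |\mathrm{St}_G(n-1):\mathrm{St}_G(n)|^{\,m},$$
equivalently $s_n\ge 0$, so that $b_n$ is non-increasing. To prove it I would use that $\psi$ is injective on $\mathrm{St}(1)$ and that self-similarity yields $\psi(\mathrm{St}_G(n))=H\cap \mathrm{St}_G(n-1)^{m}$ and $\psi(\mathrm{St}_G(n+1))=H\cap \mathrm{St}_G(n)^{m}$ for $H:=\psi(\mathrm{St}_G(1))\le G^m$: an element of $\mathrm{St}_G(1)$ lies in $\mathrm{St}(n)$ exactly when all its first-level sections lie in $\mathrm{St}(n-1)$, and by self-similarity these sections already lie in $G$. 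Applying the elementary coset inequality $|A:A\cap B|\le|C:B|$ with $A=H\cap\mathrm{St}_G(n-1)^m$, $B=\mathrm{St}_G(n)^m$, $C=\mathrm{St}_G(n-1)^m$ then gives the bound, since $A\cap B=\psi(\mathrm{St}_G(n+1))$ and $|C:B|=|\mathrm{St}_G(n-1):\mathrm{St}_G(n)|^m$. Being non-increasing and bounded in $[0,1]$, $b_n$ converges.

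For the branching case, where $H\ge G^m$, I would instead establish, for $n\ge 2$, the containment $\psi(\mathrm{St}_G(n))\ge \mathrm{St}_G(n-1)^m$ together with $\psi(\mathrm{St}_G(n+1))=\psi(\mathrm{St}_G(n))\cap\mathrm{St}(n)^m$: lifting a tuple from $\mathrm{St}_G(n-1)^m\le G^m\le H$ through $\psi$ produces an element of $\mathrm{St}_G(n)$ because its first-level sections already fix level $n-1$. Intersecting $\mathrm{St}_G(n-1)^m$ with $\psi(\mathrm{St}_G(n+1))$ gives exactly $\mathrm{St}_G(n)^m$, and the induced injection of coset spaces $\mathrm{St}_G(n-1)^m/\mathrm{St}_G(n)^m\hookrightarrow \psi(\mathrm{St}_G(n))/\psi(\mathrm{St}_G(n+1))$ yields the reverse inequality $|\mathrm{St}_G(n):\mathrm{St}_G(n+1)|\ge|\mathrm{St}_G(n-1):\mathrm{St}_G(n)|^m$, i.e. $s_n\le 0$ for $n\ge 2$. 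Eventual monotonicity together with $b_n\in[0,1]$ again forces convergence, completing step (i) and hence the proof.
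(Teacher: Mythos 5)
The paper never proves this statement: it is imported verbatim from the cited source \cite[Theorem B]{Jorge}, so there is no internal proof to compare yours against; what can be assessed is the correctness of your argument and its relation to the strategy the paper discloses. Your proof is correct and complete. The telescoping identity $\sum_{k=1}^{N}s_k m^{-k}=a_1-b_{N+1}$, the bound $0\le b_n\le 1$ coming from $\log_m|\Gamma_m:\mathrm{St}_{\Gamma_m}(n)|=(m^n-1)/(m-1)$, and the Stolz--Ces\`aro reduction are all valid, so the theorem indeed rests on your step (i), and both halves of step (i) are sound: for $G$ self-similar one has $\psi(\mathrm{St}_G(n))=\psi(\mathrm{St}_G(1))\cap\big(\mathrm{St}_G(n-1)\times\dotsb\times\mathrm{St}_G(n-1)\big)$ and the injection of coset spaces yields $s_n\ge 0$, while for $G$ branching one has $\mathrm{St}_G(n-1)\times\dotsb\times\mathrm{St}_G(n-1)\le\psi(\mathrm{St}_G(n))$ and the reverse injection yields $s_n\le 0$ (your argument in fact works for all $n\ge 1$, not just $n\ge 2$). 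Moreover, your mechanism coincides with the one underlying the cited proof insofar as the paper reveals it: the identity \eqref{align: sn for branching} for branching groups, obtained there from the Second Isomorphism Theorem, is exactly your coset-space injection $A/(A\cap B)\hookrightarrow C/B$ expressed in quotient form, and its consequence $s_n\le 0$ is precisely your eventual monotonicity of $b_n=\log_m|\mathrm{St}_G(n-1):\mathrm{St}_G(n)|/m^{n-1}$. So in substance this is the same proof strategy --- sign control of $s_n$ forced by the algebraic hypothesis, then an elementary summation argument turning the lower limit into the genuine limit $\log_m|G:\mathrm{St}_G(1)|-S_G(1/m)$ --- with Stolz--Ces\`aro serving as a clean packaging of that last step.
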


For a branching group $G$ an application of the Second Isomorphism Theorem yields
\begin{align}
\label{align: sn for branching}
    s_n=-\log_m|\psi^{-1}(G\times\dotsb\times G)\mathrm{St}_G(n):\psi^{-1}(G\times\dotsb\times G)\mathrm{St}_G(n+1)|;
\end{align}
see \cite[Section 3]{Jorge} for further details.

\subsection{The generalized Brunner-Sidki-Vieira group $H$}
The generalized Brunner-Sidki-Vieira group $H$ acting on the $m$-adic tree is generated by the automorphisms~$a$ and $b$ defined recursively as $\psi(a)=(1,\dotsc,1,a)\sigma$ and $\psi(b)=(1,\dotsc,1,b^{-1})\sigma$, where $\sigma=(1\,\dotsb \,m)\in \mathrm{Sym}(m)$; see \cref{fig: BSV generalized}. Clearly $H\le \Gamma_m$. We fix the element $\lambda:=ab^{-1}$. Note that $\lambda\in \st_H(1)$ and that $\psi(\lambda)=(1,\dots, 1,ab)$.

The generalized Brunner-Sidki-Vieira group $H$ satisfies the following properties.

\begin{proposition}
\label{proposition: self-similar etc m}
	The group $H$ is self-similar, super strongly fractal, level-transitive and saturated.
\end{proposition}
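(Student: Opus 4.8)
The plan is to verify the four properties of $H$ in sequence, exploiting the recursive definitions $\psi(a)=(1,\dotsc,1,a)\sigma$ and $\psi(b)=(1,\dotsc,1,b^{-1})\sigma$ as the primary computational tool throughout.

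\textbf{Self-similarity.} First I would show that all sections of the generators lie in $H$. Since $H$ is generated by $a$ and $b$, it suffices to control the sections of $a$ and $b$ at every vertex. From the recursive formulas, the first-level sections of $a$ are the trivial automorphism (at vertices $1,\dotsc,m-1$) and $a$ itself (at vertex $m$), and similarly for $b$ the sections are trivial or $b^{-1}$. Since $1, a, b^{-1} \in H$, all first-level sections of the generators lie in $H$. A straightforward induction on the length of the vertex $v$, using the cocycle identity $g|_{vw} = (g|_v)|_w$ together with the fact that sections of products and inverses are again products and inverses of sections, then shows $g|_v \in H$ for every $g \in H$ and every vertex $v$.

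\textbf{Level-transitivity and super strong fractality.} For level-transitivity, I would note that $a$ acts on the first level as the $m$-cycle $\sigma$, hence transitively; combined with self-similarity and the fact that the section $a|_m = a$ again acts as a full cycle on the next level, an inductive argument shows $H$ acts transitively on every level. For super strong fractality I must show $\psi_v(\mathrm{St}_H(n)) = H$ for every vertex $v$ at level $n$. The natural approach is induction on $n$. The base case $n=1$ requires showing $\psi_v(\mathrm{St}_H(1)) = H$ for each first-level vertex $v$; here the key observation is that $\lambda = ab^{-1} \in \mathrm{St}_H(1)$ with $\psi(\lambda) = (1,\dotsc,1,ab)$, which produces the section $ab$ at the last coordinate. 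To generate all of $H$ as sections, I would exhibit suitable elements of $\mathrm{St}_H(1)$ whose sections at a prescribed vertex recover both $a$ and $b$ (for instance using conjugates and products of $\lambda$, $a^m$, $b^m$ and similar stabilizer elements whose sections are computable). The inductive step then propagates this downward using self-similarity. I expect this to be the main obstacle: carefully producing explicit stabilizer elements whose sections generate $H$ at an arbitrary vertex, since the generators $a,b$ do not themselves stabilize the first level and one must combine them cleverly.

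\textbf{Saturation.} Finally, for saturation I must exhibit, for each $n$, a characteristic subgroup $G_n \le \mathrm{St}_H(n)$ acting level-transitively on every subtree rooted at level $n$. The natural candidate is to build $G_n$ from the super strong fractality structure already established, for example taking a characteristic subgroup (such as a term of a verbal or derived series, or the level stabilizer $\mathrm{St}_H(n)$ intersected with an appropriate characteristic subgroup) whose sections at level-$n$ vertices recover a level-transitive subgroup of $H$. Since super strong fractality gives $\psi_v(\mathrm{St}_H(n)) = H$ and $H$ is itself level-transitive, the sections act level-transitively on each subtree; the remaining work is to verify that the chosen $G_n$ is genuinely characteristic in $H$, which should follow from choosing $G_n$ to be the preimage under $\psi_n$ of a characteristic-friendly product and invoking the already-proven fractality. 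I would close by assembling these four verifications into the statement.
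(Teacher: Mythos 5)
Your outline for self-similarity and level-transitivity is fine and matches the paper's. The genuine gap is in super strong fractality: you propose to prove the base case $\psi_v(\mathrm{St}_H(1))=H$ and then let ``the inductive step propagate this downward using self-similarity.'' That propagation does not work in general. If $h\in\mathrm{St}_H(1)$ has $h|_{v_1}=h'$ with $h'\in\mathrm{St}_H(1)$, the element $h$ need not lie in $\mathrm{St}_H(2)$, since $h$ may move level-$2$ vertices outside the subtree rooted at $v_1$; this is exactly why \emph{strongly fractal} and \emph{super strongly fractal} are distinct notions (see the reference \cite{JoneFractal} cited in the paper). The correct argument, which you gesture at but do not isolate, is a single direct computation valid at every level: $\psi_k(a^{m^k})=(a,\dotsc,a)$ and $\psi_k(b^{m^k})=(b^{-1},\dotsc,b^{-1})$, with $a^{m^k},b^{m^k}\in\mathrm{St}_H(k)$, so that $\psi_v(\mathrm{St}_H(k))\supseteq\langle a,b^{-1}\rangle=H$ for every vertex $v$ at level $k$ --- no induction and no ``clever combination of conjugates of $\lambda$'' is needed. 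Relatedly, your remark that $a,b$ ``do not stabilize the first level'' and that producing stabilizer sections is ``the main obstacle'' points you toward $\lambda$, which only yields $ab$ at one coordinate and is the harder road; the powers $a^{m^k},b^{m^k}$ do the whole job.

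For saturation your candidate subgroup is never pinned down, and some of the options you float would not obviously work: $\mathrm{St}_H(n)$ is not known to be characteristic at this stage of the paper, and preimages under $\psi_n$ of direct products are not characteristic in general. The paper takes $G_n=H^{m^n}$, which is verbal hence characteristic, lies in $\mathrm{St}_H(n)$ because $\Gamma_m/\mathrm{St}_{\Gamma_m}(n)$ has exponent $m^n$, and contains $a^{m^n}$, whose section at every level-$n$ vertex is the level-transitive odometer $a$; this gives level-transitivity on each subtree directly. You should replace your tentative list of candidates with this explicit choice.
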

\begin{proof}
	The sections of the generators of $H$ are again in $H$; thus, any section of any element of $H$ is again in $H$ proving $H$ is self-similar. Note that $H=\langle a,b^{-1}\rangle$, and since for every $k\in \mathbb{N}$ we have both $\psi_k(a^{m^k})=(a,\dotsc,a)$ and $\psi_k(b^{m^k})=(b^{-1},\dotsc,b^{-1})$ with $a^{m^k},b^{m^k}\in\mathrm{St}_{H}(k)$, the group $H$ is super strongly fractal. Since $H$ acts transitively on the first level of the tree and $H$ is fractal it is level-transitive by \cite[Lemma 2.7]{JoneFractal}. Since the cyclic subgroup generated by $a$ is level-transitive, and the subgroup $H^{m^k}\le \mathrm{St}_H(k)$ contains $\langle a^{m^k}\rangle$, it acts transitively on all the subtrees at the $n$th level of the tree. And because $H^{m^k}$ is characteristic in $H$ this implies that $H$ is saturated.
\end{proof}

\begin{figure}[H]
\begin{center}
		\begin{forest}
				for tree={circle, fill=teal, inner sep=1.2pt, outer sep=0pt, s sep=5mm,}
				[,name=root,
				[, name=a1]
				[, name=a2]
				[, name=a3]
				[, name=a4, 
				[, name=b1]
				[, name=b2]
				[, name=b3]
				[,name=b4,
				[, name=c1]
				[, name=c2]
				[, name=c3]
				[,name=c4, 
				[,name=d1][,name=d2][,name=d3][,name=d4 [, fill=white, edge=white][,fill=white, edge=white][,fill=white, edge=white][,fill=white, edge=dotted]]]]]]
				]
			    \draw[-latex, color=Rhodamine] (a1.45) to[out=45,in=135,looseness=1] (a2.135);
			    \draw[-latex, color=Rhodamine] (a2.45) to[out=45,in=135,looseness=1] (a3.135);
			    \draw[-latex, color=Rhodamine] (a3.45) to[out=45,in=135,looseness=1] (a4.135);
			    \draw[-latex, color=Rhodamine] (a4.-150) to[out=-150,in=-30,looseness=1] (a1.-30);
			    \draw[-latex, color=Rhodamine] (b1.45) to[out=45,in=135,looseness=1] (b2.135);
			    \draw[-latex, color=Rhodamine] (b2.45) to[out=45,in=135,looseness=1] (b3.135);
			    \draw[-latex, color=Rhodamine] (b3.45) to[out=45,in=135,looseness=1] (b4.135);
			    \draw[-latex, color=Rhodamine] (b4.-150) to[out=-150,in=-30,looseness=1] (b1.-30);
			    \draw[-latex, color=Rhodamine] (c1.45) to[out=45,in=135,looseness=1] (c2.135);
			    \draw[-latex, color=Rhodamine] (c2.45) to[out=45,in=135,looseness=1] (c3.135);
			    \draw[-latex, color=Rhodamine] (c3.45) to[out=45,in=135,looseness=1] (c4.135);
			    \draw[-latex, color=Rhodamine] (c4.-150) to[out=-150,in=-30,looseness=1] (c1.-30);
			    \draw[-latex, color=Rhodamine] (d1.45) to[out=45,in=135,looseness=1] (d2.135);
			    \draw[-latex, color=Rhodamine] (d2.45) to[out=45,in=135,looseness=1] (d3.135);
			    \draw[-latex, color=Rhodamine] (d3.45) to[out=45,in=135,looseness=1] (d4.135);
			    \draw[-latex, color=Rhodamine] (d4.-150) to[out=-150,in=-30,looseness=1] (d1.-30);
			    \node (a) [above=0cm of root]{$a$};
			\end{forest}\quad
		\begin{forest}
				for tree={circle, fill=teal, inner sep=1.5pt, outer sep=0pt, s sep=5mm,}
				[,name=root,
				[, name=a1]
				[, name=a2]
				[, name=a3]
				[, name=a4, 
                [, name=b1 , 
				[, name=c1]
				[, name=c2]
				[, name=c3]
				[,name=c4, 
				[,name=d1 , [, fill=white, edge=white][,fill=white, edge=white][,fill=white, edge=white][,fill=white, edge=dotted]][,name=d2][,name=d3][,name=d4]]]
				[, name=b2]
                [, name=b3 ]
				[,name=b4]
				]
				]
			    \draw[-latex, color=Rhodamine] (a1.45) to[out=45,in=135,looseness=1] (a2.135);
			    \draw[-latex, color=Rhodamine] (a2.45) to[out=45,in=135,looseness=1] (a3.135);
			    \draw[-latex, color=Rhodamine] (a3.45) to[out=45,in=135,looseness=1] (a4.135);
			    \draw[-latex, color=Rhodamine] (a4.-150) to[out=-150,in=-30,looseness=1] (a1.-30);
			    \draw[-latex, color=Melon] (b4.45) to[out=135,in=45,looseness=1] (b3.45);
			    \draw[-latex, color=Melon] (b3.135) to[out=135,in=45,looseness=1] (b2.45);
			    \draw[-latex, color=Melon] (b2.135) to[out=135,in=45,looseness=1] (b1.45);
			    \draw[-latex, color=Melon] (b1.-30) to[out=-30,in=-150,looseness=1] (b4.-150);
			    \draw[-latex, color=Rhodamine] (c1.45) to[out=45,in=135,looseness=1] (c2.135);
			    \draw[-latex, color=Rhodamine] (c2.45) to[out=45,in=135,looseness=1] (c3.135);
			    \draw[-latex, color=Rhodamine] (c3.45) to[out=45,in=135,looseness=1] (c4.135);
			    \draw[-latex, color=Rhodamine] (c4.-150) to[out=-150,in=-30,looseness=1] (c1.-30);
			    \draw[-latex, color=Melon] (d4.45) to[out=135,in=45,looseness=1] (d3.45);
			    \draw[-latex, color=Melon] (d3.135) to[out=135,in=45,looseness=1] (d2.45);
			    \draw[-latex, color=Melon] (d2.135) to[out=135,in=45,looseness=1] (d1.45);
			    \draw[-latex, color=Melon] (d1.-30) to[out=-30,in=-150,looseness=1] (d4.-150);
			    \node (b) [above=0cm of root]{$b$};
			\end{forest}
			\end{center}
    \caption{The generators of the generalized Brunner-Sidki-Vieira group acting on the 4-adic tree.}
    \label{fig: BSV generalized}
\end{figure}
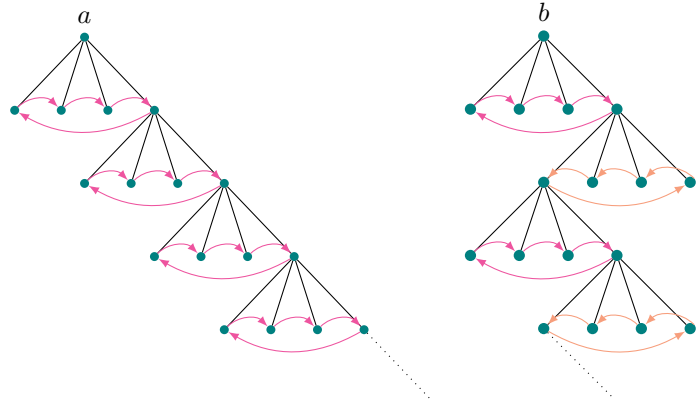

Since $H$ is weakly branch, as shown below in \cref{proposition: branch structure BSV m}, the following result follows from \cite[Theorem 7.5]{Nekrashevych}, generalizing \cite[Proposition 8.7]{Nekrashevych} to any $m\ge 2$.

\begin{proposition}
\label{proposition: automorphisms of H}
    The automorphism group of $H$ is given by $\mathrm{Aut}(H)=N_{\mathrm{Aut}~\mathcal{T}}(H)$.
\end{proposition}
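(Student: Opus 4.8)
The plan is to identify $\mathrm{Aut}(H)$ with $N_{\mathrm{Aut}~\mathcal{T}}(H)$ through the conjugation action and to deduce the statement from Nekrashevych's rigidity theorem for weakly branch groups. Conjugation by elements of the normalizer gives a homomorphism $\Phi\colon N_{\mathrm{Aut}~\mathcal{T}}(H)\to \mathrm{Aut}(H)$, $g\mapsto (h\mapsto g^{-1}hg)$; proving that $\Phi$ is an isomorphism is exactly what the asserted equality means, once $\mathrm{Aut}(H)$ is understood via $\Phi$.

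To apply \cite[Theorem 7.5]{Nekrashevych} I would first record that its hypotheses hold in our situation: by \cref{proposition: self-similar etc m} the group $H$ is self-similar, level-transitive and saturated, and by \cref{proposition: branch structure BSV m} it is weakly branch. These are precisely the standing assumptions of the cited theorem, so no new verification beyond these two propositions is required.

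Injectivity of $\Phi$ amounts to triviality of the centralizer $C_{\mathrm{Aut}~\mathcal{T}}(H)$, which holds for any weakly branch group. Given $c$ commuting with $H$, I would use the nontriviality of the rigid vertex stabilizers: a nontrivial $g\in\mathrm{rist}_H(v)$ acts only on the subtree below $v$, while $g^c=c^{-1}gc$ acts only below $(v)c$, so commutation forces $c$ to fix every vertex supporting a nontrivial rigid stabilizer; level-transitivity then pins $c$ down on each level and yields $c=1$. Thus $\Phi$ embeds $N_{\mathrm{Aut}~\mathcal{T}}(H)$ into $\mathrm{Aut}(H)$.

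The substantive point, and the step I expect to be the main obstacle, is surjectivity: each abstract automorphism of $H$ must be realized by conjugation by a tree automorphism normalizing $H$. This is the content of \cite[Theorem 7.5]{Nekrashevych}, whose proof reconstructs the rooted-tree combinatorics intrinsically from the lattice of rigid stabilizers of the weakly branch group, so that any group automorphism necessarily extends to an automorphism of $\mathcal{T}$ normalizing $H$. Invoking this theorem gives surjectivity and hence the equality $\mathrm{Aut}(H)=N_{\mathrm{Aut}~\mathcal{T}}(H)$. Since the rigidity theorem is stated for arbitrary regular rooted trees, the only thing left to confirm is that the passage from the binary case \cite[Proposition 8.7]{Nekrashevych} to general $m\ge 2$ requires nothing beyond the weak branchness already established, which is indeed the case.
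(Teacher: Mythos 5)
Your proposal is correct and follows essentially the same route as the paper, which likewise deduces the statement directly from \cite[Theorem 7.5]{Nekrashevych} after noting that $H$ is weakly branch (\cref{proposition: branch structure BSV m}) and saturated (\cref{proposition: self-similar etc m}). The extra details you supply (the conjugation homomorphism and the triviality of the centralizer of a weakly branch group) are standard and consistent with how that theorem is applied; the paper simply states the citation without spelling them out.
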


We conclude this section by presenting to the reader some structural properties of $H$ proved by Sidki and Silva in  \cite{BSV2}, which are needed to compute the Hausdorff dimension of the closure of $H$.  We define~$L_2$ via
$$L_2:=\psi^{-1}\big(H'\times\stackrel{m}{\cdots}\times H'\big).$$
Note that in \cite[Theorem A]{BSV2} it is shown that $H$ is weakly regular branch over $H'$, therefore $L_2$ is contained in $H'$. We refer the reader to \cite{BSV2} for the proofs of the following propositions.

\begin{proposition}[see {\cite[Proof of Theorem 1]{BSV2}}]
    \label{proposition: H/H' free abelian of rank 2} The quotient $H/H'=\langle aH'\rangle \times\langle bH'\rangle$ is free abelian of rank 2.
\end{proposition}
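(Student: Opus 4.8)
The plan is to show that the obvious surjection $\mathbb{Z}^2\twoheadrightarrow H/H'$ sending the standard basis to $(aH',bH')$ is an isomorphism. Writing $H/H'=\mathbb{Z}^2/R$ for the relation subgroup $R\le\mathbb{Z}^2$, and since $H/H'$ is generated by the two classes $aH',bH'$, both freeness and rank $2$ are equivalent to the single assertion $R=0$, i.e. that $a^rb^s\in H'$ forces $(r,s)=(0,0)$. So the whole statement reduces to controlling $R$.

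The first key step is to lift the defining wreath recursion to the free group $F=F(a,b)$ through the homomorphism $\tilde\psi\colon F\to F\wr\mathrm{Sym}(m)$ with $a\mapsto(1,\dots,1,a)\sigma$ and $b\mapsto(1,\dots,1,b^{-1})\sigma$, which is compatible with $\psi$ via the projection $\pi\colon F\twoheadrightarrow H$. Writing $\alpha(w),\beta(w)$ for the exponent sums of $a$ and $b$ in a word $w$, I would use the two homomorphisms $F\wr\mathrm{Sym}(m)\to\mathbb{Z}$ given by $(w_1,\dots,w_m)\tau\mapsto\sum_j\alpha(w_j)$ and $(w_1,\dots,w_m)\tau\mapsto\sum_j\beta(w_j)$; composing with $\tilde\psi$ and evaluating on $a,b$ shows that $\sum_j\alpha(w_j)=\alpha(w)$ and $\sum_j\beta(w_j)=-\beta(w)$ for every $w$ in the index-$m$ subgroup $F_0=\{w:\alpha(w)+\beta(w)\equiv 0\bmod m\}$, on which $\tilde\psi$ has trivial $\mathrm{Sym}(m)$-component. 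Now any relator $w\in\ker\pi$ lies in $F_0$ (as $a,b$ induce the same cycle on the first level) and its first-level sections $w_1,\dots,w_m$ are again relators, so if $a^rb^s\in H'$, i.e. $(r,s)\in R$, then $\sum_j(\alpha(w_j),\beta(w_j))=(r,-s)$ with each summand in $R$, whence $(r,-s)\in R$. Thus $R$ is invariant under the reflection $(r,s)\mapsto(r,-s)$, which forces $(2r,0),(0,2s)\in R$ whenever $(r,s)\in R$. Consequently everything reduces to proving that $aH'$ and $bH'$ have infinite order in $H/H'$.

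This torsion-freeness is the step I expect to be the main obstacle, precisely because the powers $a^{mk},b^{mk}$ have diagonal sections $(a^k,\dots,a^k)$ and $(b^{-k},\dots,b^{-k})$, on which the symmetric "sum of sections" invariant collapses back to the class one is trying to test. To break this I would first compute the abelianized first-level section data of $H'$ itself: since $H/H'$ is $2$-generated abelian, $H'$ is normally generated by $[a,b]$, and from $\psi([a,b])=(a^{-1}b^{-1},ba,1,\dots,1)$ together with the cyclic first-level action one finds that the image of $H'$ in $(H/H')^m$ is exactly the set of augmentation (coordinate-sum-zero) multiples of the class of $ab$. The aim is then to feed a hypothetical relation $a^{mk}\in H'$ into this description and descend one level at a time, using the self-replicating inclusion $L_2=\psi^{-1}(H'\times\dots\times H')\subseteq H'$ — which sends elements with sections in $H'$ back into $H'$ — together with the super strong fractality of $H$, so as to force the order of $aH'$ to be divisible by arbitrarily high powers of $m$, hence trivial; the case of $bH'$ is entirely analogous. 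Combined with the reflection symmetry of $R$ obtained above, this would give $R=0$ and complete the proof.
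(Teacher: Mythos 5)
The paper does not actually prove this proposition: it is imported from Sidki--da Silva \cite{BSV2} (``We refer the reader to \cite{BSV2} for the proofs of the following propositions''), so there is no internal proof to compare against. Judged on its own, the first half of your argument is correct and cleanly organized: the exponent-sum computation on the free-group lift of the wreath recursion does show that the relation lattice $R\le\mathbb{Z}^2$ is stable under $(r,s)\mapsto(r,-s)$ --- equivalently, that $\overline{a}\mapsto\overline{a}$, $\overline{b}\mapsto\overline{b}^{-1}$ is a well-defined endomorphism of $H/H'$, induced by ``product of first-level sections mod $H'$'' --- and this correctly reduces the proposition to showing that no nontrivial power of $a$ or $b$ lies in $H'$.

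The genuine gap is in the second half, which you yourself flag as the main obstacle and leave as a sketch. First, the descent does not close up as described: feeding $a^{mk}\in H'$ into the image of $H'$ in $(H/H')^m$ yields $\overline{a}^{\,k}=\overline{ab}^{\,\ell_i}$ with $\sum_i\ell_i=0$, and unless all $\ell_i$ vanish (which gives an immediate contradiction with minimality, no iteration needed) you learn nothing more about $\overline{a}$ --- only that $\overline{ab}$ is torsion. Running the descent on $(ab)^N\in H'$ then produces statements about $\overline{\lambda}=\overline{ab^{-1}}$ entangled with $\overline{ab}$ (the sections of $(ab)^m$ are congruent to $\lambda$, while $\psi(\lambda^N)=(1,\dots,1,(ab)^N)$ is invisible to the coordinate-sum invariant), so the claimed ``order divisible by arbitrarily high powers of $m$'' induction is never actually set up; making it work requires a simultaneous induction on the orders of $\overline{ab}$ and $\overline{\lambda}$ with a case analysis, essentially the content of the paper's Proposition 3.3, which in turn uses the finer structure of $H'$ from Proposition 2.5 (the free abelian rank-$(m-1)$ complement of $L_2$) rather than just the augmentation-zero description, and must treat $m$ even separately since there $m\mid 2N$ only forces $(m/2)\mid N$. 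Second, the tools you invoke to drive the descent point the wrong way: $L_2\subseteq H'$ and super strong fractality let you certify membership in $H'$ from sections already known to lie in $H'$, whereas the descent needs to extract information about sections from membership in $H'$. There is also a mild circularity risk in using the weak regular branch property over $H'$ (itself only quoted from \cite{BSV2}) to prove the abelianization statement.
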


\begin{proposition}[see {\cite[Theorem A]{BSV2}}]
	\label{proposition: branch structure BSV m}
	The commutator subgroup $H'$ factors as 
 $$H'=\langle [\lambda,a],[\lambda,a]^a,\dotsc,[\lambda,a]^{a^{m-2}}\rangle \ltimes L_2,$$
	and $\langle [\lambda,a],[\lambda,a]^a,\dotsc,[\lambda,a]^{a^{m-2}}\rangle$ is free abelian of rank $m-1$. In particular, the group $H$ is weakly regular branch over $H'$. Furthermore
 $$\psi(H')\subseteq\{(h_1,\dots, h_m):h_i\in H,~ h_1\cdots h_m\in H'\}.$$
\end{proposition}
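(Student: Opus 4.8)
The plan is to derive everything from the explicit first-level decompositions of the generators of $H'$. Write $g_i:=g|_{v_i}$ for the sections of $g\in\mathrm{St}(1)$ at the first-level vertices $v_1,\dots,v_m$, and write $\bar g$ for the image of $g$ in $H/H'\cong\mathbb{Z}^2$. A direct computation from $\psi(\lambda)=(1,\dots,1,ab)$ and $\psi(a)=(1,\dots,1,a)\sigma$ gives $\psi(\lambda^a)=(ba,1,\dots,1)$ and hence
\[
\psi([\lambda,a])=(ba,1,\dots,1,b^{-1}a^{-1}),\qquad
\psi([\lambda,a]^{a^j})=(1,\dots,1,(ba)^{-1},ba,1,\dots,1),
\]
where for $1\le j\le m-2$ the two nontrivial sections of $[\lambda,a]^{a^j}$ sit at the adjacent vertices $v_j,v_{j+1}$; in particular $[a,b]=[\lambda,a]^a$. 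Thus every generator of $A:=\langle[\lambda,a],\dots,[\lambda,a]^{a^{m-2}}\rangle$ is supported on a pair of cyclically adjacent vertices with sections that are powers of $ba$. I would record these formulas first, as the remainder is bookkeeping built on top of them.

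To see that $A$ is free abelian of rank $m-1$, note that the generators pairwise commute: at any vertex lying in the support of two of them the corresponding sections are both powers of $ba$, so all section-commutators vanish. Next consider the homomorphism $\mathrm{St}_H(1)\to(\mathbb{Z}^2)^m\cong\mathbb{Z}^2\otimes\mathbb{Z}^m$, $g\mapsto(\overline{g_1},\dots,\overline{g_m})$, which is well defined because sections multiply coordinatewise; under it $[\lambda,a]^{a^j}\mapsto(\bar a+\bar b)\otimes(e_{j+1}-e_j)$ with indices read mod $m$, and these $m-1$ vectors are $\mathbb{Z}$-linearly independent, forcing $A\cong\mathbb{Z}^{m-1}$. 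Post-composing with the sum $\sum_i\overline{g_i}$ yields a homomorphism $\Phi\colon\mathrm{St}_H(1)\to\mathbb{Z}^2$ that is invariant under $H$-conjugation, since abelianisation absorbs the conjugating sections while the level-one permutation merely reorders the summands. As $\Phi([a,b])=\overline{(ba)^{-1}}+\overline{ba}=0$ and, $H$ being $2$-generated, $H'$ is the normal closure of $[a,b]$, we obtain $\Phi(H')=0$, which is precisely the final inclusion $\psi(H')\subseteq\{(h_1,\dots,h_m):h_1\cdots h_m\in H'\}$.

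The heart of the matter is the weakly regular branch property $\psi(H')\supseteq H'\times\cdots\times H'$ (note $\mathrm{St}_{H'}(1)=H'$), from which $L_2=\psi^{-1}(H'\times\cdots\times H')\subseteq H'$ follows at once by injectivity of $\psi$ on $\mathrm{St}(1)$. I would first manufacture one element of $H'$ supported on a single vertex: from $\psi(a^m)=(a,\dots,a)$ and $\psi(\lambda^a)=(ba,1,\dots,1)$ one gets $\psi([a^m,\lambda^a])=([a,ba],1,\dots,1)=([a,b]^a,1,\dots,1)$ with $[a^m,\lambda^a]\in H'$. Now set $R:=\{h\in H':(h,1,\dots,1)\in\psi(H')\}$, a subgroup of $H'$. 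It is invariant under conjugation by all of $H$: if $h\in R$ is realised by $g\in H'$ and $c\in H$ is arbitrary, super strong fractality (\cref{proposition: self-similar etc m}) supplies $d\in\mathrm{St}_H(1)$ with $d_1=c$, and then $g^d\in H'$ realises $(h^c,1,\dots,1)$. Hence $R$ is normal in $H$ and contains $[a,b]^a$, so it contains the normal closure of $[a,b]$, namely $H'$; thus $R=H'$. Conjugating by powers of $a$ (which shift the support cyclically, the wrap-around only conjugating sections inside $H'$) and multiplying then realises all of $H'\times\cdots\times H'$ inside $\psi(H')$. I expect this to be the main obstacle, since everything else is linear algebra whereas here one must locate an explicit one-vertex-supported commutator and bootstrap it to the full branching via fractality.

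It remains to assemble the semidirect product. The subgroup $L_2$ is normal in $H$, as conjugation permutes and conjugates the $H'$-coordinates, and $A\cap L_2=1$, because a nontrivial element of $A$ has a section with nonzero image in $\mathbb{Z}^2$ by the rank computation, hence a section outside $H'$. Finally, using the embedding $H'/L_2\hookrightarrow(\mathbb{Z}^2)^m$, $gL_2\mapsto(\overline{g_1},\dots,\overline{g_m})$, whose image lies in $(\bar a+\bar b)\otimes\{c\in\mathbb{Z}^m:\sum_i c_i=0\}$ by the generator description of $H'$ together with $\Phi(H')=0$, the images of the generators of $A$ form a $\mathbb{Z}$-basis of this rank-$(m-1)$ lattice. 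Therefore $A$ surjects onto $H'/L_2$, giving $H'=AL_2$, and combined with $A\cap L_2=1$ and $A\cong\mathbb{Z}^{m-1}$ this yields $H'=A\ltimes L_2$, completing the proof.
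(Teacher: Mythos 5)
Your proof is correct, but note that the paper itself offers no proof of this proposition: the first two assertions are quoted from \cite[Theorem A]{BSV2}, and only the final inclusion $\psi(H')\subseteq\{(h_1,\dots,h_m): h_1\cdots h_m\in H'\}$ is claimed by the authors as a ``simple observation''. What you have written is therefore a self-contained reconstruction of the Sidki--Silva result rather than a variant of an argument appearing in this paper. Your key computations check out against the paper's right-action conventions: $\psi(\lambda^a)=(ba,1,\dots,1)$, $\psi([\lambda,a])=(ba,1,\dots,1,(ab)^{-1})$, $[a,b]=[\lambda,a]^{a}$, and $\psi([a^m,\lambda^a])=([a,b]^{a},1,\dots,1)$; the bootstrap from this single-vertex element to $H'\times\cdots\times H'\le\psi(H')$ via normality of $R$ (using first-level strong fractality from \cref{proposition: self-similar etc m} and the fact that $H'$ is the normal closure of $[a,b]$ in the $2$-generated group $H$) is sound, and it immediately gives $L_2\le H'$. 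The identification of the image of $H'/L_2$ in $(H/H')^m$ with $(\bar a+\bar b)\otimes\{c\in\mathbb{Z}^m:\sum_i c_i=0\}$, together with the observation that the images of the $m-1$ generators of $A$ form a $\mathbb{Z}$-basis of this rank-$(m-1)$ lattice, correctly yields both $H'=AL_2$ and $A\cap L_2=1$; your derivation of the final inclusion from the conjugation-invariant map $\Phi$ and \cref{proposition: H/H' free abelian of rank 2} is exactly the intended ``simple observation''. One cosmetic slip: the section of $[\lambda,a]$ at $v_m$ is $(ab)^{-1}=b^{-1}a^{-1}$, which is a conjugate of $(ba)^{-1}$ but not literally a power of $ba$; this does not damage your commutativity argument, since $v_m$ lies in the support of no other generator of $A$, but the blanket phrase ``sections that are powers of $ba$'' should be qualified accordingly.
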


We note that the last statement in \cref{proposition: branch structure BSV m} is a simple observation which was not stated in \cite[Theorem A]{BSV2}.

\section{Proof of the main result}
\label{section: Hausdorff dimension}

We prove first that for a finitely generated group the Hausdorff dimension of its closure in $\Gamma_m$ coincides with the one of its commutator subgroup. This was proved for closed subgroups of $\Gamma_p$ by Abért and Virág in \cite[Theorem 5]{AbertVirag}.

\begin{proposition}
\label{proposition: G and G' same hausdorff dimension}
Let $G\le \Gamma_m$ be finitely generated. Then the Hausdorff dimension of $\overline{G}$ equals the one of $\overline{G'}$. In particular if $\overline{G}$ is solvable, then $\overline{G}$ has trivial Hausdorff dimension.
\end{proposition}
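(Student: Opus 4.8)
The plan is to reduce everything to a single estimate on the abelianized finite quotients and then exploit that the denominator in the Hausdorff dimension formula grows exponentially in the level. Write $Q_n:=G/\st_G(n)$ for the finite group induced by the action of $G$ on the $n$th level, and recall that the surjection $G\twoheadrightarrow Q_n$ carries $G'$ onto $Q_n'$; since $\st_{G'}(n)=G'\cap\st_G(n)$, this yields $|G':\st_{G'}(n)|=|Q_n'|$ and $|G:\st_G(n)|=|Q_n|$. Hence the two numerators in the Hausdorff dimension formula differ by $\log_m|Q_n|-\log_m|Q_n'|=\log_m|Q_n/Q_n'|$, and I would first bound this difference. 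As $Q_n$ embeds in $\Gamma_m/\st_{\Gamma_m}(n)$, which is the $n$-fold iterated wreath product of the cyclic group $C_m=\langle\sigma\rangle$ and therefore has exponent dividing $m^n$, every element of the abelian group $Q_n/Q_n'$ has order dividing $m^n$. Being a quotient of the abelianization of $G$, it is generated by $d$ elements, where $d$ is the number of generators of $G$; consequently $|Q_n/Q_n'|\le m^{nd}$, so $\log_m|Q_n/Q_n'|\le nd$.

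Setting $D_n:=\log_m|\Gamma_m:\st_{\Gamma_m}(n)|=(m^n-1)/(m-1)$ and dividing the previous estimate by $D_n$, I obtain
$$\left|\frac{\log_m|G:\st_G(n)|}{D_n}-\frac{\log_m|G':\st_{G'}(n)|}{D_n}\right|\le \frac{nd}{D_n},$$
which tends to $0$ as $n\to\infty$ because $D_n$ grows exponentially while $nd$ grows only linearly. Taking lower limits, the two quotients share the same $\liminf$, which is precisely the assertion $\mathrm{hdim}_{\Gamma_m}(\overline G)=\mathrm{hdim}_{\Gamma_m}(\overline{G'})$. The only inputs are the finite generation of $G$ (to bound the number of generators of $Q_n/Q_n'$) and the exponent bound for the iterated wreath product, so this part is routine.

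For the final assertion I would induct on the derived length $\ell$ of $\overline G$, noting that $\overline G$ solvable of length $\ell$ forces $G$, and hence each quotient $Q_n$, to be solvable of length at most $\ell$. The base case $\ell\le 1$ is clean: a finitely generated abelian $G$ has $Q_n$ abelian, $d$-generated and of exponent dividing $m^n$, so $\log_m|Q_n|\le nd$ and $\mathrm{hdim}_{\Gamma_m}(\overline G)=\liminf_n nd/D_n=0$. For the inductive step the first part gives $\mathrm{hdim}_{\Gamma_m}(\overline G)=\mathrm{hdim}_{\Gamma_m}(\overline{G'})$, with $\overline{G'}$ of derived length at most $\ell-1$. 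I expect the main obstacle to lie exactly here: $G'$ need not be finitely generated (already for metabelian examples such as the lamplighter), so the inductive hypothesis cannot be invoked verbatim. The way I would circumvent this is to telescope $\log_m|Q_n|=\sum_{k=0}^{\ell-1}\log_m|Q_n^{(k)}/Q_n^{(k+1)}|$ and bound each term by $n\cdot d\bigl(Q_n^{(k)}\bigr)$ via the exponent bound, where $d(\cdot)$ denotes the minimal number of generators; it then suffices to show that $d\bigl(Q_n^{(k)}\bigr)$ grows subexponentially in $m^n$. This growth estimate — morally, the statement that finite generation localizes the derived elements near the root of the tree — is the technical heart of the argument, and with it the sum is $o(m^n)$, forcing $\mathrm{hdim}_{\Gamma_m}(\overline G)=0$.
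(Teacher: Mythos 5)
Your first paragraph is correct and is essentially the paper's own argument in different clothing: the identity $\log_m|G:\mathrm{St}_G(n)|=\log_m|G:G'\mathrm{St}_G(n)|+\log_m|G':\mathrm{St}_{G'}(n)|$ (your $\log_m|Q_n|=\log_m|Q_n:Q_n'|+\log_m|Q_n'|$), the bound $\log_m|Q_n:Q_n'|\le dn$ for a $d$-generated abelian group of exponent dividing $m^n$, and the comparison with the exponentially growing denominator $\log_m|\Gamma_m:\mathrm{St}_{\Gamma_m}(n)|=(m^n-1)/(m-1)$. If anything, your justification of the exponent bound (via the exponent of the $n$-fold iterated wreath power of $C_m$) is more careful than the paper's, which only appeals to faithfulness of the action on $m^n$ points; faithfulness alone does not bound element orders by $m^n$, so your version is the one to keep.

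The gap is in the ``in particular'' clause, and you have diagnosed its location correctly: the naive induction on derived length fails because $G'$ need not be finitely generated. But the repair you propose does not close it. The estimate you would need, namely $n\cdot d\bigl(Q_n^{(k)}\bigr)=o(m^n)$, is precisely the missing content: the only general tool, Schreier's bound $d\bigl(Q_n^{(k)}\bigr)\le d\,\lvert Q_n:Q_n^{(k)}\rvert$, already gives something of order $m^{dn}$ at the first step, which is far too weak, and you offer no structural input about derived subgroups of subgroups of iterated wreath products that would do better. A route that does work (and is how Abért and Virág, to whom the paper attributes the $\Gamma_p$ case, handle it) is to avoid generator counts altogether: one uses the fact that the Hausdorff dimension of a closed subgroup is approximated by the dimensions of closures of its finitely generated subgroups, notes that $\overline{G'}=\overline{\,\overline{G}'\,}$ is solvable of derived length at most $\ell-1$ so every finitely generated subgroup of it falls under the induction hypothesis, and concludes $\mathrm{hdim}_{\Gamma_m}(\overline{G'})=0$. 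To be fair, the paper's own proof also establishes only the first assertion and is silent on the solvable clause, and only the first assertion is used later (for $H$ and $H'$); so your proposal matches the paper on everything the paper actually proves, and is honestly incomplete exactly where the paper is too.
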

\begin{proof}
Since
$$\log_m|G:\mathrm{St}_G(n)|=\log_m|G:G'\mathrm{St}_G(n)|+\log_m|G':\mathrm{St}_{G'}(n)|,$$
it is enough to prove that the first logarithmic index in the above sum is negligible for sufficiently large values of $n$. To this end, suppose $G$ is generated by $d$ elements. Then we get
\begin{align*}
    \log_m|G:G'\mathrm{St}_G(n)|\le \log_m m^{dn}=dn,
\end{align*}
as each generator of the quotient can have at most order $m^n$ since they are acting faithfully on a set of $m^n$ elements. However, the growth of $\log_m|\Gamma_m:\mathrm{St}_{\Gamma_m}(n)|$ is exponential in $n$ so the result follows.
\end{proof}

Therefore by \cref{proposition: G and G' same hausdorff dimension} the Hausdorff dimension of the closure of $H$ coincides with the one of the closure of $H'$. The latter is a branching group, so to compute the Hausdorff dimension of its closure it is enough to compute the orders of the quotients $H'/L_2\mathrm{St}_{H'}(n)$ by \cref{align: sn for branching}. As will become apparent later, we need to compute the orders of $ab$ and $\lambda$ modulo $H'\mathrm{St}_H(n)$.

\begin{lemma}
\label{Lemma: quotient direct product of finite cyclic groups}
The quotient $H'/L_2 \mathrm{St}_{H'}(n)$ is a direct product of $m-1$ cyclic groups of order $\ell$, where $\ell$ is the order of $ab$ modulo $H'\mathrm{St}_H(n-1)$.
\end{lemma}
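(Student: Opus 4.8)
The plan is to analyze the structure of the quotient $H'/L_2\mathrm{St}_{H'}(n)$ using the semidirect product decomposition from \cref{proposition: branch structure BSV m}. Recall that $H'=\langle [\lambda,a],[\lambda,a]^a,\dotsc,[\lambda,a]^{a^{m-2}}\rangle \ltimes L_2$, where the displayed free abelian part has rank $m-1$ and $L_2=\psi^{-1}(H'\times\dotsb\times H')$. Since $L_2$ is a normal subgroup contained in $H'$, quotienting out by $L_2\mathrm{St}_{H'}(n)$ should collapse $H'$ onto the image of the free abelian complement. First I would show that $L_2\subseteq \mathrm{St}_{H'}(n)$ is not automatic, but that modulo $L_2\mathrm{St}_{H'}(n)$ the quotient is generated by the images of the $m-1$ generators $[\lambda,a]^{a^i}$ for $0\le i\le m-2$. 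The key structural input is that $H'/L_2$ is already abelian (free abelian of rank $m-1$), so the quotient $H'/L_2\mathrm{St}_{H'}(n)$ is a quotient of $\mathbb{Z}^{m-1}$, hence a direct product of at most $m-1$ cyclic groups.

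The central computation is to identify the common order $\ell$ of each cyclic factor and to argue the factors are genuinely independent (so the product is direct with all factors of equal order). Here the natural idea is to pass through $\psi$. Since $\psi([\lambda,a]^{a^i})$ places a commutator-type section in one coordinate determined by $i$ (via the cyclic shift induced by powers of $a$), I would compute $\psi([\lambda,a])$ explicitly and track where the nontrivial section lands. The generator $[\lambda,a]^{a^i}$ should have its essential section supported at a single first-level vertex, and conjugation by powers of $a$ cyclically permutes these vertices. Because $\psi(\lambda)=(1,\dotsc,1,ab)$, the commutator $[\lambda,a]$ will have sections that are expressible in terms of $ab$ and its conjugates; this is precisely why the order $\ell$ is controlled by the order of $ab$ modulo a level stabilizer one level down.

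The crux is relating membership in $L_2\mathrm{St}_{H'}(n)$ at level $n$ to membership in $H'\mathrm{St}_H(n-1)$ one level down. The plan is to use that $L_2=\psi^{-1}(H'\times\dotsb\times H')$ together with $\mathrm{St}_{H'}(n)$, apply $\psi$, and observe that a power $[\lambda,a]^{\ell a^i}$ lies in $L_2\mathrm{St}_{H'}(n)$ precisely when the corresponding section (a power of $ab$) lies in $H'\mathrm{St}_H(n-1)$. This should give that the order of the image of each generator equals the order of $ab$ modulo $H'\mathrm{St}_H(n-1)$, namely $\ell$. I would leverage super strong fractality and the last containment in \cref{proposition: branch structure BSV m}, namely $\psi(H')\subseteq\{(h_1,\dots,h_m):h_i\in H,\ h_1\cdots h_m\in H'\}$, to control the coordinates and verify that independent powers in distinct coordinates cannot be absorbed into $L_2\mathrm{St}_{H'}(n)$, securing directness.

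The main obstacle I anticipate is the bookkeeping in the $\psi$-coordinates: showing simultaneously that (i) each factor has order exactly $\ell$ (neither more nor less), and (ii) the $m-1$ factors form a genuine \emph{direct} product rather than merely generating the quotient. The upper bound on each order and the directness both require carefully disentangling how $L_2$ (which allows arbitrary $H'$-sections in every coordinate) interacts with $\mathrm{St}_{H'}(n)$ (which constrains the depth-$(n-1)$ sections). The delicate point is that $L_2$ already absorbs all of $H'$ in each coordinate, so the residual order is dictated entirely by the abelianized section $ab$ living in $H/H'$ one level up; making this reduction precise, and confirming no cross-coordinate collapse occurs because the nontrivial sections of the distinct generators occupy distinct first-level vertices, is where the real work lies.
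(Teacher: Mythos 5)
Your overall route---pushing the quotient through $\psi$, identifying $\psi(L_2\mathrm{St}_{H'}(n))$ with $\bigl(H'\mathrm{St}_H(n-1)\times\cdots\times H'\mathrm{St}_H(n-1)\bigr)\cap\psi(H')$, and reducing the order of each generator to the order of $ab$ modulo $H'\mathrm{St}_H(n-1)$---is the same as the paper's, and that part of the plan is sound. The genuine gap is in your justification of directness. You assert that each generator $[\lambda,a]^{a^i}$ has its essential section supported at a \emph{single} first-level vertex and that directness then follows because ``the nontrivial sections of the distinct generators occupy distinct first-level vertices.'' This is false: from $\psi(\lambda)=(1,\dots,1,ab)$ and $\psi(a)=(1,\dots,1,a)\sigma$ one computes that $\psi([\lambda,a])$ has the two nontrivial coordinates $ba$ and $(ab)^{-1}$ (and conjugating by $a^i$ cyclically shifts them), so modulo $H'\times\cdots\times H'$ each generator is supported on \emph{two} adjacent first-level vertices, and consecutive generators share a coordinate. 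Your stated reason for the absence of cross-coordinate collapse therefore does not apply, and directness does not follow from disjointness of supports.

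The missing step is exactly the linear algebra the paper supplies: the images of the $m-1$ generators in $(\mathbb{Z}/\ell\mathbb{Z})^m$ (one copy of $\mathbb{Z}/\ell\mathbb{Z}$ per first-level coordinate, via $\langle ab\rangle H'\mathrm{St}_H(n-1)/H'\mathrm{St}_H(n-1)$) are the rows of the matrix $A$ of \cref{Lemma: quotient direct product of finite cyclic groups}, each row being a difference $e_{i+1}-e_i$ of standard basis vectors taken cyclically, and one must verify---by Gaussian elimination, or by recognizing the row space as the sum-zero subgroup of $(\mathbb{Z}/\ell\mathbb{Z})^m$---that this row space is free of rank $m-1$ over $\mathbb{Z}/\ell\mathbb{Z}$. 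Note also that your upper bound (``a quotient of $\mathbb{Z}^{m-1}$, hence a direct product of at most $m-1$ cyclic groups'') does not yield the statement: a quotient of $\mathbb{Z}^{m-1}$ need not have all cyclic factors of equal order, so the fact that every factor has order exactly $\ell$ is precisely what the matrix computation is needed for, not something that can be read off coordinatewise.
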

\begin{proof}
The monomorphism $\psi:\st_H(1)\rightarrow H\times\stackrel{m}{\cdots}\times H$ induces the isomorphism, which we also denote $\psi$ by a slight abuse of notation,
\begin{align}\label{al: isomorphism}
    \psi:\frac{H'}{L_2\st_{H'}(n)}\overset{\cong}{\longrightarrow} \frac{\psi(H')}{\psi(L_2\st_{H'}(n))}.
\end{align}
Now note that 
\begin{align*}
    \psi(L_2\st_{H'}(n))&=\left(H'\times\stackrel{m}{\cdots} \times H'\right)\psi(\st_{H'}(n))\\
    &=\left(H'\times\stackrel{m}{\cdots}\times H'\right)\left(\left(\st_{H}(n-1)\times\stackrel{m}{\cdots}\times \st_{H}(n-1)\right)\cap \psi(H')\right)\\
    &=\left(H'\st_H(n-1)\times\stackrel{m}{\cdots}\times H'\st_H(n-1)\right)\cap \psi(H'),
\end{align*}
where the last equality follows from Dedekind's Modular Law as $H'\ge L_2$ by \cref{proposition: branch structure BSV m}. Therefore the isomorphism in \textcolor{teal}{(}\ref{al: isomorphism}\textcolor{teal}{)} induced by $\psi$ yields
\begin{align*}
   \psi:\frac{H'}{L_2\st_{H'}(n)}&\overset{\cong}{\longrightarrow} \frac{\psi(H')}{\left(H'\st_H(n-1)\times\stackrel{m}{\cdots}\times H'\st_H(n-1)\right)\cap \psi(H')}\\
   &\overset{\cong}{\longrightarrow} \frac{\psi(H')\left(H'\st_H(n-1)\times\stackrel{m}{\cdots}\times H'\st_H(n-1)\right)}{\left(H'\st_H(n-1)\times\stackrel{m}{\cdots}\times H'\st_H(n-1)\right)}
\end{align*}
by the second isomorphism theorem. Thus $\psi$ induces an isomorphism between $H'/L_2\st_{H'}(n)$ and its image in
\begin{align*}
\frac{H\times\stackrel{m}{\cdots}\times H}{H'\st_H(n-1)\times\stackrel{m}{\cdots}\times H'\st_H(n-1)},
\end{align*}
where the image via $\psi$ is generated by the images via $\psi$ of the elements $[\lambda,a]^{a^i}$ with $0\leq i\leq m-2$, in other words
\begin{align*}
\psi([\lambda,a]^{a^i})&\equiv_{\psi(L_2)}(1,\stackrel{i-1}{\ldots},1,(ba)^{-1},ba,1,\dots, 1).
\end{align*}
If the order of $ba$ modulo $H'\st_H(n-1)$ is $\ell$, we obtain an isomorphism between $H'/L_2\st_{H'}(n)$ and the subgroup of $\mathbb{Z}/\ell\mathbb{Z}\times\stackrel{m}{\cdots}\times\mathbb{Z}/\ell\mathbb{Z}$ corresponding to the image of the matrix 
\begin{align*}
        A:=\begin{pmatrix}
        1&0&0&\dotsb &0&0&-1\\
        -1&1&0&\dotsb &0&0&0\\
        0&-1&1&\dotsb &0&0&0\\
        \vdots &\vdots&\vdots&\ddots&\vdots& \vdots&\vdots\\
        0&0&0&\dotsb & 1&0&0\\
        0&0&0&\dotsb & -1&1&0
    \end{pmatrix}.
    \end{align*}
    By Gaussian elimination we get 
    \begin{align}\label{matix A}
       A\sim \begin{pmatrix}
        1&0&0&\dotsb &0&0&-1\\
        0&1&0&\dotsb &0&0&-1\\
        0&0&1&\dotsb &0&0&-1\\
        \vdots &\vdots&\vdots&\ddots&\vdots& \vdots&\vdots\\
        0&0&0&\dotsb & 1&0&-1\\
        0&0&0&\dotsb & 0&1&-1
    \end{pmatrix}
    \end{align}
showing that 
\begin{align*}
    H'/L_2\st_{H'}(n) &\cong \mathbb{Z}/\ell\mathbb{Z}\times\stackrel{m-1}{\cdots}\times\mathbb{Z}/\ell\mathbb{Z}.\qedhere
\end{align*}
\end{proof}

\begin{proposition}
\label{proposition: orders of elements in abelianization m}
We have the following:
\begin{enumerate}[\normalfont(i)]
    \item if $m\geq 3$ is odd, for every $k\ge 0$ and $1\le j\le m-1$ we get
    \begin{enumerate}[(P1)]
    \vspace{5pt}
        \item\label{P1} $\quad(ab)^{jm^{k}}\in H'\mathrm{St}_H(2k)\setminus H'\mathrm{St}_H(2k+1)$
        \vspace{5pt}
\item\label{P2} $\quad\lambda^{jm^{k}}\in H'\mathrm{St}_H(2k+1)\setminus H'\mathrm{St}_H(2k+2);$
    \end{enumerate}
    \vspace{5pt}
\item if $m\geq 2$ is even, for every $k\ge 0$ we get
\vspace{5pt}
\begin{enumerate}[(Q1)]
    \item $\quad(ab)^{m^{k+1}2^{-1}}\in H'\mathrm{St}_H(2k+1)\setminus H'\mathrm{St}_H(2k+2)$
   \vspace{5pt}
\item $\quad\lambda^{m^{k+1}2^{-1}}\in H'\mathrm{St}_H(2k+2)\setminus H'\mathrm{St}_H(2k+3).$
\end{enumerate}
   \vspace{5pt}
Furthermore, if $m\ge 4$ is even, then for every $k\ge 0$ and $1\leq j\leq m/2-1$ or $m/2+1\leq j\leq m-1$ we get
   \vspace{5pt}
\begin{enumerate}[(R1)]
    \item  $\quad(ab)^{jm^{k}}\in H'\mathrm{St}_H(2k)\setminus H'\mathrm{St}_H(2k+1)$
       \vspace{5pt}
\item$\quad\lambda^{jm^{k}}\in H'\mathrm{St}_H(2k+1)\setminus H'\mathrm{St}_H(2k+2).$
\end{enumerate}
\end{enumerate}
\end{proposition}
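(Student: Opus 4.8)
The plan is to transfer everything to the abelianization. By \cref{proposition: H/H' free abelian of rank 2} we have $H/H'\cong\mathbb{Z}^2$ with basis $\{aH',bH'\}$, so I identify $H/H'$ with $\mathbb{Z}^2$ via $a\mapsto(1,0)$ and $b\mapsto(0,1)$ and write $\overline{g}$ for the image of $g\in H$. For each $n\ge0$ put $V_n:=\overline{\mathrm{St}_H(n)}\le\mathbb{Z}^2$. Since $H'\mathrm{St}_H(n)$ is normal and contains $H'$, one has $g\in H'\mathrm{St}_H(n)$ if and only if $\overline{g}\in V_n$. As $\overline{ab}=(1,1)$ and $\overline{\lambda}=(1,-1)$, each of (P1)--(R2) is exactly a statement of the form $(N,N)\in V_n\setminus V_{n+1}$ (in the $ab$-cases) or $(N,-N)\in V_n\setminus V_{n+1}$ (in the $\lambda$-cases), for the relevant $N$ and $n$. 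Hence it suffices to determine the lattices $V_n$.

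To do so I would set up a recursion. Let $\iota\colon\mathrm{St}_H(1)\to\mathbb{Z}^2$ be $g\mapsto\overline g$ and let $D\colon\mathrm{St}_H(1)\to(\mathbb{Z}^2)^m$ send $g$, with $\psi(g)=(g_1,\dots,g_m)$, to $(\overline{g_1},\dots,\overline{g_m})$; both are homomorphisms because sections multiply coordinatewise on $\mathrm{St}(1)$ and $H/H'$ is abelian. One has $\ker\iota=H'$ and $\ker D=L_2$, and since $L_2\le H'$ by \cref{proposition: branch structure BSV m}, $\iota$ factors as $\iota=\theta\circ D$ for a homomorphism $\theta$ on $\mathrm{Im}\,D$. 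Using $\psi(a^m)=(a,\dots,a)$ from super strong fractality, $\psi(\lambda)=(1,\dots,1,ab)$, and the sections of the generators $[\lambda,a]^{a^i}$ of $H'$ modulo $\psi(L_2)$ computed in the proof of \cref{Lemma: quotient direct product of finite cyclic groups}, I would show that $\mathrm{St}_H(1)/L_2$ is generated by $a^m$, $\lambda$ and the $[\lambda,a]^{a^i}$, that $\mathrm{Im}\,D$ is the constant-difference lattice $\{((x_i,y_i))_i:x_i-y_i\text{ independent of }i\}$, and that
\[
\theta\big((x_i,y_i)_i\big)=\Big(m(x_1-y_1)+\textstyle\sum_i y_i,\ -\sum_i y_i\Big).
\]
Combining this with $\mathrm{St}_H(n+1)=\{g\in\mathrm{St}_H(1):g_i\in\mathrm{St}_H(n)\ \forall i\}$ yields
\[
V_{n+1}=\theta\big(\{((x_i,y_i))_i\in\mathrm{Im}\,D:(x_i,y_i)\in V_n\ \forall i\}\big).
\]
Here the inclusion $\subseteq$ is immediate, while $\supseteq$ needs a correction: an element realizing prescribed section-degrees in $V_n$ need only have its sections in $H'\mathrm{St}_H(n)$, but since by definition $\psi(L_2)=H'\times\cdots\times H'$ one may multiply by a suitable element of $L_2\le\mathrm{St}_H(1)$ to push all sections into $\mathrm{St}_H(n)$ (using normality of $\mathrm{St}_H(n)$) without changing $\iota$.

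Finally I would solve the recursion by induction on $n$, expecting
\[
V_{2k}=(m^k\mathbb{Z})^2,\qquad V_{2k+1}=\{(\alpha,\beta):m^k\mid\beta,\ m^{k+1}\mid\alpha+\beta\}
\]
for $m$ odd, and
\[
V_{2k}=\{(\alpha,\beta):\tfrac{m^k}{2}\mid\beta,\ 2m^k\mid\alpha+\beta\}\ (k\ge1),\qquad V_{2k+1}=\{(\alpha,\beta):m^k\mid\beta,\ m^{k+1}\mid\alpha+\beta\}
\]
for $m$ even, with $V_0=\mathbb{Z}^2$. Each inductive step reduces to solving a congruence $2y_i+c\equiv0$ modulo a power of $m$ with $y_i$ constrained modulo a smaller power, and the solvability condition $\gcd(2,m)\mid c$ is precisely what produces the two-step, parity-dependent shape of the answer; this modular computation is the main obstacle and the only place where $m$ odd and $m$ even genuinely diverge. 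Once the $V_n$ are known, (P1)--(R2) follow by substituting $(N,N)$ or $(N,-N)$ and checking divisibility, the exclusions coming from the facts that $m\mid 2j$ forces $m\mid j$ for $m$ odd and $\tfrac m2\mid j$ for $m$ even.
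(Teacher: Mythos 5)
Your proposal is correct, and it takes a genuinely different route from the paper's proof. The paper establishes \textit{(P1)}--\textit{(R2)} by a direct induction on $k$, one membership statement at a time: each exclusion is proved by contradiction, pulling a hypothetical witness back through $\psi$ via the weak regular branch structure over $H'$ (\cref{proposition: branch structure BSV m}) and showing that the resulting exponent vector cannot lie in the image of the matrix $A$ of \cref{Lemma: quotient direct product of finite cyclic groups}. You instead compute the full lattices $V_n=\overline{\mathrm{St}_H(n)}\le H/H'\cong\mathbb{Z}^2$ once and for all, via the self-similarity recursion expressing $V_{n+1}$ in terms of $V_n$, and then read off all six statements by substituting $(N,\pm N)$. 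The structural inputs are the same ($L_2\le H'$, $\psi(L_2)=H'\times\cdots\times H'$, the first-level sections of $a^m$, $\lambda$ and of the generators of $H'$ modulo $L_2$), but your packaging is strictly stronger: it determines the order of an arbitrary element of $H$ modulo $H'\mathrm{St}_H(n)$, not just of $ab$ and $\lambda$, and in particular recovers \cref{proposition: order of H' over LSt(n)} immediately. I have checked the points you leave as claims: $\mathrm{Im}\,D$ is indeed the constant-difference lattice, since $\mathrm{St}_H(1)$ is generated by $a^m$ and the conjugates $\lambda^{a^i}$ for $0\le i\le m-1$ (Reidemeister--Schreier), whose $D$-images form a basis of that lattice; your formula for $\theta$ agrees with $\iota$ on this basis; the $L_2$-correction in the $\supseteq$ direction of the recursion is legitimate precisely because $(x_i,y_i)\in V_n$ already forces the sections to lie in $H'\mathrm{St}_H(n)$; and the closed forms for $V_{2k}$ and $V_{2k+1}$ satisfy the recursion in both parities and yield exactly the stated inclusions and exclusions (e.g.\ $(jm^k,jm^k)\notin V_{2k+1}$ reduces to $m\nmid 2j$). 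In a final write-up the step that must be carried out in full is the solvability analysis of $2y_i+c\equiv 0$ modulo $m^{k+1}$ (resp.\ $2m^k$) subject to the constraint on the $y_i$ coming from $V_n$, together with the resulting range of $\sum_i y_i$; this is where the parity of $m$ enters and where the half-period shift for $m$ even is produced, so it deserves the same care the paper devotes to its matrix-$A$ computations.
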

\begin{proof}
We will prove the proposition by induction over $k$, but first let us sketch the inductive structure of the proof for the convenience of the reader. On the one hand, we establish the base cases for both \textcolor{teal}{\textit{(P1)}} and \textcolor{teal}{\textit{(P2)}}. Then, we prove \textcolor{teal}{\textit{(P1)}} and \textcolor{teal}{\textit{(P2)}} simultaneously. On the other hand, we establish the base cases for \textcolor{teal}{\textit{(R1)}}, \textcolor{teal}{\textit{(R2)}}, \textcolor{teal}{\textit{(Q1)}} and \textcolor{teal}{\textit{(Q2)}}, in that specified order. Finally we proceed with the proof of \textcolor{teal}{\textit{(R1)}} and \textcolor{teal}{\textit{(Q1)}}, and use both of them to prove \textcolor{teal}{\textit{(R2)}} and \textcolor{teal}{\textit{(Q2)}}. Note that the methods used to prove \textcolor{teal}{\textit{(P1)}}, \textcolor{teal}{\textit{(R1)}}, and \textcolor{teal}{\textit{(Q1)}} are closely aligned. The same happens with \textcolor{teal}{\textit{(P2)}}, \textcolor{teal}{\textit{(R2)}}, and \textcolor{teal}{\textit{(Q2)}}.

Now let us proceed with the actual proof. First of all, observe that for any $m\ge 2$ we have
\begin{align}
\label{align: ab order mod H'St(k)}
    \psi((ab)^{m})\equiv_{\psi(H')}(\lambda,\dotsc,\lambda)\quad\text{and}\quad
    \psi(\lambda)=(1,\dotsc,1,ab),
\end{align}
where the first equality follows from
\begin{align*}
    \psi(a^m)&=(a,\dots, a)\quad\text{and}\quad \psi(b^m)=(b^{-1},\dots, b^{-1}).
\end{align*}
The base case, i.e. $k=0$, for \textcolor{teal}{\textit{(P1)}} follows from the quotient $H/\mathrm{St}_H(1)$ being cyclic of order $m\geq 3$ generated by $a\mathrm{St}_H(1)=b\mathrm{St}_H(1)$ and \cref{align: ab order mod H'St(k)}. Indeed since $m$ is odd then $m$ and 2 are coprime and $ab$ has order $m$ modulo $H'\mathrm{St}_H(1)$. For \textcolor{teal}{\textit{(P2)}}, it is clear that $\lambda^j\in H'\st_H(1)$. Now assume by contradiction that $\lambda^j\in \st_H(2)H'$, then from \cref{Lemma: quotient direct product of finite cyclic groups} we deduce that $\psi(\lambda^j)=(h_1,\dots, h_m)$ with $h_1\cdots h_m\in H'\st_H(1)$, but by \cref{align: ab order mod H'St(k)} we know that $\psi(\lambda^j)=(1,\dots,1, (ab)^j)$ so $(ab)^j\in H'\st_H(1)$ which implies that $j\geq m$ arriving to a contradiction. Then  by induction on $k$, for $1\le j\le m-1$ we get
\begin{align}
\label{align: induction ab}
    (ab)^{jm^{k-1}}\in H'\mathrm{St}_H(2k-2)\setminus H'\mathrm{St}_H(2k-1)
\end{align}
and 
\begin{align}
\label{align: induction lambda}
    \lambda^{jm^{k-1}}\in H'\mathrm{St}_H(2k-1)\setminus H'\mathrm{St}_H(2k).
\end{align}

We prove first \textcolor{teal}{\textit{(P1)}}. By \textcolor{teal}{Equations (}\ref{align: ab order mod H'St(k)}\textcolor{teal}{)} and \textcolor{teal}{(}\ref{align: induction ab}\textcolor{teal}{)}, there exists some $h\in H'$ such that
\begin{align*}
    \psi_2(h(ab)^{jm^{k}})&\in \big(H'\mathrm{St}_H(2k-2)\times\stackrel{m^2}{\cdots}\times H'\mathrm{St}_H(2k-2)\big)\cap \psi_2(\mathrm{St}_H(2)).
\end{align*}
Therefore, using that $H$ is weakly regular branch over $H'$ by \cref{proposition: branch structure BSV m},  we get 
$$(ab)^{jm^{k}}\in H'\mathrm{St}_H(2k)$$
from the injectivity of the map $\psi_2$. Hence we just need to prove that
$$(ab)^{jm^{k}}\notin H'\mathrm{St}_H(2k+1)$$
for any $1\le j\le m-1$. Assume by contradiction that this is not the case. Thus using \cref{align: ab order mod H'St(k)} and \cref{proposition: branch structure BSV m}, there exists some $c\in \langle [\lambda,a]^{a^r}\mid 0\le r\le m-2\rangle$ such that 
\begin{align*}
    (\lambda^{jm^{k-1}},\dots, \lambda^{jm^{k-1}})\psi(c)\in H'\st_H(2k)\times \stackrel{m}{\cdots}\times H'\st_H(2k)
\end{align*}
where $\psi(c)\equiv_{\psi(L_2)}\big((ab)^{\ell_1},\dots, (ab)^{\ell_m}\big)$ with $\ell_i\in \mathbb{Z}$ as
\begin{align*}
    \psi(H')&\le \langle ab\rangle H'\times\stackrel{m}{\cdots}\times \langle ab\rangle H'.
\end{align*}
Therefore we deduce that there exists $\ell\in \mathbb{Z}$ such that
$$(ab)^\ell\lambda^{jm^{k-1}}\in H'\mathrm{St}_H(2k).$$
By \textcolor{teal}{(}\ref{align: induction lambda}\textcolor{teal}{)} we must have
$$(ab)^\ell\in H'\mathrm{St}_H(2k-1)\setminus H'\mathrm{St}_H(2k).$$
However, this is impossible. Indeed, suppose that $m^i$ is the largest power of $m$ dividing $\ell$. If $i\geq k$ then $(ab)^\ell\in H'\mathrm{St}_H(2k)$ leading to a contradition. If $i<k$ then by the induction hypothesis $(ab)^\ell\in H'\st_{H}(2i)\setminus H'\st_H(2i+1)$
and $$(ab)^\ell\not\in H'\st_H(2i+1)\geq H'\st_H(2k-1)$$ leading to a contradiction.

For \textcolor{teal}{\textit{(P2)}}, the containment $\lambda^{jm^{k}}\in H'\mathrm{St}_H(2k+1)$ follows from \textcolor{teal}{\textit{(P1)}} and \cref{align: ab order mod H'St(k)}. To prove that $\lambda^{jm^k}\notin H'\mathrm{St}_H(2k+2)$ we assume by contradiction that this is not the case. Therefore, as before, using \cref{align: ab order mod H'St(k)} and \cref{proposition: branch structure BSV m}, there exists some $c\in \langle [\lambda,a]^{a^r}\mid 0\le r\le m-2\rangle$ such that 
\begin{align*}
    (1,\dots,1, (ab)^{jm^{k}})\psi(c)\in H'\st_{H}(2k+1)\times \stackrel{m}{\cdots}\times H'\st_H(2k+1)
\end{align*}
where $\psi(c)\equiv_{\psi(L_2)}\big((ab)^{\ell_1},\dots, (ab)^{\ell_m}\big)$ with $\ell_i\in \mathbb{Z}$. From \textcolor{teal}{\textit{(P1)}}, we deduce that in order to have 
\begin{align*}
  (ab)^{jm^{k}}  (ab)^{\ell_m}=(ab)^{jm^{k}+\ell_m}\in H'\st_H(2k+1),
\end{align*}
the integer $jm^{k}+\ell_m$ has to be divisible by $m^{k+1}$. In other words $jm^{k}+\ell_m\equiv 0 \mod m^{k+1}$. Similarly  for 
\begin{align*}
  (ab)^{\ell_i}\in H'\st_H(2k+1)
\end{align*}
we need $\ell_i\equiv 0 \mod m^{k+1}$ for all $1\leq i\leq m-1$. Thus $(\ell_1,\dots,\ell_{m-1}, \ell_{m})\equiv (0,\dots, 0, -jm^k)$ modulo $m^{k+1}$. However, this is not possible as this vector is not in the image of the matrix $A \mod m^{k+1}$ described in \textcolor{teal}{(\ref{matix A})}. This follows from the fact that the sum of all the coordinates of each row of the matrix $A$ is 0 modulo $m^{k+1}$.

To prove \textcolor{teal}{\textit{(R1)}},\textcolor{teal}{\textit{(R2)}},\textcolor{teal}{\textit{(Q1)}} and \textcolor{teal}{\textit{(Q2)}} first observe the following equality
\begin{align}
\label{align: m/2}
    \psi((ab)^{m/2})&=(b^{-1},a,\dotsc,b^{-1},a).
\end{align}
We begin with the base cases, as before $k=0$, for \textcolor{teal}{\textit{(R1)}} and \textcolor{teal}{\textit{(R2)}}. The argument is similar to the case  \textcolor{teal}{\textit{(P1)}} and \textcolor{teal}{\textit{(P2)}} with the only difference that in this case $ab$ and $\lambda$ have order $m/2$ modulo $H'\st_H(1)$ and $H'\st_H(2)$ respectively. For \textcolor{teal}{\textit{(Q1)}}, it is clear that from \cref{align: m/2} that $(ab)^{m/2}$ belongs to $\st_H(1)$, to prove that $(ab)^{m/2}\not\in H'\st_H(2)$, assume by contradiction that this is not the case. Then  using \cref{align: m/2} and \cref{proposition: branch structure BSV m}, there exists some $c\in \langle [\lambda,a]^{a^r}\mid 0\le r\le m-2\rangle$ such that 
\begin{align*}
    (b^{-1},a,\dots,b^{-1},a)\psi(c)\in H'\st_H(1)\times \stackrel{m}{\cdots}\times H'\st_H(1)
\end{align*}
where $\psi(c)\equiv_{\psi(L_2)}\big((ab)^{\ell_1},\dots, (ab)^{\ell_m}\big)$ with $\ell_i\in \mathbb{Z}$.
Therefore we deduce that there exists $\ell\in \mathbb{Z}$ such that
$$b^{-1}(ab)^\ell\in H'\mathrm{St}_H(1)=\st_H(1).$$
This is a contradiction because $a\equiv b$ modulo $\st_H(1)$ and $b^{2\ell-1}$ is not in $\st_H(1)$. Indeed $b$ has order $m$ (which is even) modulo $\st_H(1)$ and $2\ell-1$ is odd. For \textcolor{teal}{\textit{(Q2)}}, it follows from \cref{align: m/2} and \cref{align: ab order mod H'St(k)} that $\lambda^{m/2}$ belongs to $H'\st_H(2)$. To prove that $\lambda^{m/2}\not\in H'\st_H(3)$, assume by contradiction that this is not the case. Then  using, as before, \cref{align: m/2} and \cref{proposition: branch structure BSV m}, there exists some $c\in \langle [\lambda,a]^{a^r}\mid 0\le r\le m-2\rangle$ such that 
\begin{align*}
    (1,\dots,1,(ab)^{m/2})\psi(c)\in H'\st_H(2)\times \stackrel{m}{\cdots}\times H'\st_H(2),
\end{align*}
where $\psi(c)\equiv_{\psi(L_2)}\big((ab)^{\ell_1},\dots, (ab)^{\ell_m}\big)$ with $\ell_i\in \mathbb{Z}$.
Therefore we deduce that 
$$(ab)^{\ell_i}\in H'\mathrm{St}_H(2)$$
for $1\leq i\leq m-1$, and
$$(ab)^{m/2}(ab)^{\ell_m}\in H'\mathrm{St}_H(2).$$
Using the base case for \textcolor{teal}{\textit{(Q1)}} and \textcolor{teal}{\textit{(R1)}} we deduce that $\ell_i\equiv 0\mod m$ for $1\leq i\leq m-1$ and $\ell_m\equiv -m/2 \mod m$. Thus $(\ell_1,\dots, \ell_{m-1},\ell_m)\equiv (0,\dots, 0, -m/2)$ modulo $m$. However this is not possible as this vector is not in the image of the matrix $A \mod m$. This follows again as before from the sum of all the coordinates on each row of $A$ being 0 modulo $m$.

Then, by induction on $k$, we have that, for $m\geq 4$ even and $1\leq j\leq m/2-1$ or $m/2+1\leq j\leq m-1$, \textcolor{teal}{Equations (}\ref{align: induction ab}\textcolor{teal}{)} and \textcolor{teal}{(\ref{align: induction lambda})} hold, and for $m\ge 2$ even 
\begin{align}\label{align: induction ab even}
(ab)^{m^{k}2^{-1}}\in H'\st_H(2k-1)\setminus H'\st_H(2k)
\end{align}
and
\begin{align}\label{align: induction lambda even}
    \lambda^{m^k2^{-1}}\in H'\st_H(2k)\setminus H'\st_H(2k+1).
\end{align}

Let us prove first \textcolor{teal}{\textit{(R1)}} and \textcolor{teal}{\textit{(Q1)}} in that specified order. We fix $j$ to be such that either $1\leq j\leq m/2-1$ or $m/2+1\leq j\leq m-1$. As with \textcolor{teal}{\textit{(P1)}}, we have that there exists $h_1\in H'$ such that
\begin{align*}
\psi_2(h_1(ab)^{m^{k+1}/2})&\in \big(H'\mathrm{St}_H(2k-1)\times\stackrel{m^2}{\cdots}\times H'\mathrm{St}_H(2k-1)\big)\cap \psi_2(\mathrm{St}_H(2))
\end{align*}
if $m\geq 4$ even and that there exists $h_2\in H'$ such that 
\begin{align*}
    \psi_2(h_2(ab)^{jm^{k}})&\in \big(H'\mathrm{St}_H(2k-2)\times\stackrel{m^2}{\cdots}\times H'\mathrm{St}_H(2k-2)\big)\cap \psi_2(\mathrm{St}_H(2)),
\end{align*}
if $m\ge 2$ even. Using, as before, that $H$ is weakly regular branch over $H'$,  we get 
$$(ab)^{m^{k+1}/2}\in H'\mathrm{St}_H(2k+1)$$
if $m\ge 2$ is even and 
$$(ab)^{jm^{k}}\in H'\mathrm{St}_H(2k)$$
if furthermore $m\geq 4$, from the injectivity of the map $\psi_2$. We now prove first that 
$$(ab)^{jm^{k}}\notin H'\mathrm{St}_H(2k+1)$$
if $m\ge 2$ is even which concludes the proof of \textcolor{teal}{\textit{(R1)}}. As with \textcolor{teal}{\textit{(P1)}}, assume by contradiction that this is not the case. Therefore, there must exist some element $c\in \langle [\lambda,a]^{a^r}\mid 0\le r\le m-2\rangle$ such that 
\begin{align*}
    (\lambda^{jm^{k-1}},\dots, \lambda^{jm^{k-1}})\psi(c)\in H'\st_H(2k)\times \stackrel{m}{\cdots}\times H'\st_H(2k),
\end{align*}
where $\psi(c)\equiv_{\psi(L_2)}\big((ab)^{\ell_1},\dots, (ab)^{\ell_m}\big)$ with $\ell_i\in \mathbb{Z}$.
Therefore we deduce that there exists $\ell\in \mathbb{Z}$ such that
$$(ab)^\ell\lambda^{jm^{k-1}}\in H'\mathrm{St}_H(2k).$$
If $\ell \not\equiv m^k/2$ modulo $m^k$ then
\begin{align*}
    (ab)^\ell\in H'\st_H(2k-1)\setminus H'\st_H(2k)
\end{align*}
leads to a contradiction as in \textcolor{teal}{\textit{(P1)}}. Thus, we may assume that $\ell=m^k/2$. Let $c\in H'$ with $\psi(c)=((ab)^{\varepsilon_1},\dots, (ab)^{\varepsilon_m})$ be such that
\begin{align}\label{contradiction}
    (ab)^{m^k/2}\lambda^{jm^{k-1}}c\in L_2\mathrm{St}_H(2k).
\end{align}
Then applying $\psi$ and using \cref{align: ab order mod H'St(k)} we get
$$(b^{-m^{k-1}},a^{m^{k-1}},\dotsc,b^{-m^{k-1}},a^{m^{k-1}})(1,\dotsc,1,(ab)^{jm^{k-1}})((ab)^{\varepsilon_1},\dots, (ab)^{\varepsilon_m})$$
is in $H'\mathrm{St}_H(2k-1)\times\stackrel{m}{\cdots} \times H'\mathrm{St}_H(2k-1)$. Multiplying the last two coordinates we obtain
$$\lambda^{m^{k-1}}(ab)^{jm^{k-1}}(ab)^{\varepsilon_{m-1}+\varepsilon_{m}}\in H'\mathrm{St}_H(2k-1).$$

Since $\lambda^{m^{k-1}}\in H'\mathrm{St}_H(2k-1)$ by \cref{align: induction lambda}  we deduce that 
\begin{align*}
    (ab)^{jm^{k-1}}(ab)^{\varepsilon_{m-1}+\varepsilon_{m}}=(ab)^{jm^{k-1}+\varepsilon_{m-1}+\varepsilon_{m}} \in H'\mathrm{St}_H(2k-1)
\end{align*}
so, from \cref{align: induction ab}, we see that $jm^{k-1}+\varepsilon_{m-1}+\varepsilon_{m}$ has to be divisible by $m^k/2$. This implies that $\varepsilon_{m-1}+\varepsilon_{m}$ is not divisible by $m^k/2$. Let us write $\delta:=\varepsilon_{m-1}+\varepsilon_m$. Multiplying every other pair of consecutive coordinates we obtain that 
\begin{align*}
    \varepsilon_i+\varepsilon_{i+1}\equiv 0 \mod m^k/2
\end{align*}
for $1\le i\le m-2$. Therefore  the corresponding vector $(\varepsilon_1,\dots,\varepsilon_{m-1},\varepsilon_{m})$ might be written as 
\begin{align*}
    (\varepsilon,-\varepsilon,\varepsilon,-\varepsilon,\dots,\varepsilon,-\varepsilon,\varepsilon,\delta-\varepsilon)
\end{align*}
modulo $m^k/2$ for some $\varepsilon\in \mathbb{Z}/(m^k/2)\mathbb{Z}$.
However, this vector is not in the image of $A$ modulo $m^k/2$ unless $\delta$ is divisible by $m^k/2$, which contradicts the choice of $\delta$.

We now prove that 
$$(ab)^{m^{k+1}/2}\notin H'\mathrm{St}_H(2k+2)$$
which establishes \textcolor{teal}{\textit{(Q1)}}. Arguing by contradiction, assume that this is not the case. Then, there exists some $c\in \langle [\lambda,a]^{a^r}\mid 0\le r\le m-2\rangle$ such that 
\begin{align*}
    (\lambda^{m^{k}/2},\dots, \lambda^{m^{k}/2})\psi(c)\in H'\st_H(2k+1)\times \stackrel{m}{\cdots}\times H'\st_H(2k+1)
\end{align*}
where $\psi(c)\equiv_{\psi(L_2)}\big((ab)^{\ell_1},\dots, (ab)^{\ell_m}\big)$ with $\ell_i\in \mathbb{Z}$.
Therefore we deduce that there exists $\ell\in \mathbb{Z}$ such that
$$(ab)^\ell\lambda^{m^{k}/2}\in H'\mathrm{St}_H(2k+1).$$
If $m=2$ then 
\begin{align*}
    (ab)^\ell\in H'\st_H(2k)\setminus H'\st_H(2k+1)
\end{align*}
and arguing as in \textcolor{teal}{\textit{(P1)}} we arrive at a contradiction. If $m\geq 4$ and $\ell \not\equiv jm^k$ modulo $m^{k+1}$ for $1\leq j\leq  m/2-1$ or $m/2+1\leq j\leq m-1$ then the following
\begin{align*}
    (ab)^\ell\in H'\st_H(2k)\setminus H'\st_H(2k+1)
\end{align*}
leads to a contradiction as in \textcolor{teal}{\textit{(P1)}}. Thus we may assume that $\ell = jm^k$  with $j$ as above. Let $c\in H'$ with $\psi(c)=((ab)^{\varepsilon_1},\dots, (ab)^{\varepsilon_m})$ 
 be such that
$$(ab)^{jm^k}\lambda^{m^k/2}c\in L_2\mathrm{St}_H(2k+1).$$
Then applying $\psi$ and using \textcolor{teal}{Equations (}\ref{align: ab order mod H'St(k)}\textcolor{teal}{)} and \textcolor{teal}{(\ref{align: m/2})} we get
$$(\lambda^{jm^{k-1}},\dotsc,\lambda^{jm^{k-1}})(1,\dotsc,1,(ab)^{m^k/2})((ab)^{\varepsilon_1},\dots, (ab)^{\varepsilon_m})$$
is in $H'\mathrm{St}_H(2k)\times\stackrel{m}{\cdots} \times H'\mathrm{St}_H(2k)$. Thus, in the first $m-1$ coordinates we get 
$$\lambda^{jm^{k-1}}(ab)^{\varepsilon_i}\in H'\mathrm{St}_H(2k).$$
Since $\lambda^{jm^{k-1}}\not\in H'\mathrm{St}_H(2k)$ by \cref{align: induction lambda} we deduce from \cref{align: induction ab even} that $\varepsilon_i\equiv m^k/2$ modulo $m^k$ for all $1\leq i \leq m-1$. Therefore we may assume that $\varepsilon_i= m^k/2$ for all $1\leq i \leq m-1$, which yields that 
$$\lambda^{jm^{k-1}}(ab)^{m^k/2}\in H'\mathrm{St}_H(2k).$$
As seen in \cref{contradiction}, this leads to a contradiction.

To conclude it remains to prove  \textcolor{teal}{\textit{(R2)}} and \textcolor{teal}{\textit{(Q2)}}. This is done very similarly to \textcolor{teal}{\textit{(P2)}}. We begin with \textcolor{teal}{\textit{(R2)}}:
The containment $\lambda^{jm^{k}}\in H'\mathrm{St}_H(2k+1)$ follows from \textcolor{teal}{\textit{(R1)}} and \cref{align: ab order mod H'St(k)}. To prove that $\lambda^{jm^k}\notin H'\mathrm{St}_H(2k+2)$ we assume by contradiction that this is not the case, and, as before, using \cref{align: ab order mod H'St(k)} and \cref{proposition: branch structure BSV m}, there exists some $c\in \langle [\lambda,a]^{a^r}\mid 0\le r\le m-2\rangle$ such that 
\begin{align*}
    (1,\dots,1, (ab)^{jm^{k}})\psi(c)\in H'\st_{H}(2k+1)\times \stackrel{m}{\cdots}\times H'\st_H(2k+1),
\end{align*}
where $\psi(c)\equiv_{\psi(L_2)}\big((ab)^{\ell_1},\dots, (ab)^{\ell_m}\big)$ with $\ell_i\in \mathbb{Z}$. From \textcolor{teal}{\textit{(Q1)}} and \textcolor{teal}{\textit{(R1)}}, we deduce that in order for 
\begin{align*}
  (ab)^{jm^{k}}  (ab)^{\ell_m}=(ab)^{jm^{k}+\ell_m}\in H'\st_H(2k+1)
\end{align*}
to hold $jm^{k}+\ell_m$ has to be divisible by $m^{k+1}/2$, i.e. $jm^{k}+\ell_m\equiv 0 \mod m^{k+1}/2$. Similarly  for 
\begin{align*}
  (ab)^{\ell_i}\in H'\st_H(2k+1)
\end{align*}
we need $\ell_i\equiv 0 \mod m^{k+1}/2$ for all $1\leq i \leq m-1$. Therefore we see that $(\ell_1,\dots,\ell_{m-1}, \ell_{m})\equiv(0,\dots, 0, -jm^k)$ modulo $m^{k+1}/2$. However this is not in the image of the matrix $A \mod m^{k+1}/2$, which yields a contradiction.

For \textcolor{teal}{\textit{(Q2)}} the containment $\lambda^{m^{k+1}/2}\in H'\mathrm{St}_H(2k+2)$ follows from \textcolor{teal}{\textit{(Q1)}} and \cref{align: ab order mod H'St(k)}. To prove that $\lambda^{m^{k+1}/2}\notin H'\mathrm{St}_H(2k+3)$ we assume by contradiction that this is not the case, therefore, as before, using \cref{align: ab order mod H'St(k)} and \cref{proposition: branch structure BSV m}, there exists some $c\in \langle [\lambda,a]^{a^r}\mid 0\le r\le m-2\rangle$ such that 
\begin{align*}
    (1,\ldots,1, (ab)^{m^{k+1}/2})\psi(c)\in H'\st_{H}(2k+2)\times \stackrel{m}{\cdots}\times H'\st_H(2k+2)
\end{align*}
where $\psi(c)\equiv_{\psi(L_2)}\big((ab)^{\ell_1},\dots, (ab)^{\ell_m}\big)$ with $\ell_i\in \mathbb{Z}$. From \textcolor{teal}{\textit{(Q1)}} and \textcolor{teal}{\textit{(R1)}}, we deduce that in order to 
\begin{align*}
  (ab)^{m^{k+1}/2}  (ab)^{\ell_m}=(ab)^{m^{k+1}/2+\ell_m}\in H'\st_H(2k+2)
\end{align*}
to hold $m^{k+1}/2+\ell_m$ has to be divisible by $m^{k+1}$, in other words $m^{k+1}/2+\ell_m\equiv 0 \mod m^{k+1}$. Similarly  for 
\begin{align*}
  (ab)^{\ell_i}\in H'\st_H(2k+2)
\end{align*}
we need $\ell_i\equiv 0 \mod m^{k+1}$ for all $1\leq i \leq m-1$. As before we see that $(\ell_1,\ldots,\ell_{m-1}, \ell_{m})\equiv (0,\cdots, 0, -{m^{k+1}/2})$ modulo $m^{k+1}$, but this is not in the image of the matrix $A \mod m^{k+1}$. Therefore we have a contradiction in this case too.
\end{proof}

Let $n\ge 1$ and we define $k_n:=\lfloor n/2\rfloor$ and $e_n:=n-2k_n$. Then we can write $n=2k_n+e_n$. We note the equalities 
$$k_{n-1}=k_n-1+e_n,\quad e_{n-1}=1-e_n,\quad k_{n+1}=k_n+e_n,\quad e_{n+1}=1-e_n.$$ 
Recall that in \cref{align: definition of tau} we have defined the parameter $\tau$ as
\begin{align*}
    \tau:=\begin{cases}
        0& \text{if }m \text{ is odd},\\
        1& \text{if }m \text{ is even}.
    \end{cases}
\end{align*}

\begin{proposition}
\label{proposition: order of H' over LSt(n)}
For $n\ge 2$, the logarithmic orders of the quotients $H'/L_2\mathrm{St}_{H'}(n)$ are
\begin{align*}
    \log_m|H':L_2\mathrm{St}_{H'}(n)|=(m-1)(k_n-e_{n-1}\tau\log_m 2).
\end{align*}
\end{proposition}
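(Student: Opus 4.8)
The plan is to reduce the whole computation to the order of the single element $ab$ modulo the subgroups $H'\mathrm{St}_H(N)$, and then to read off these orders from \cref{proposition: orders of elements in abelianization m}. By \cref{Lemma: quotient direct product of finite cyclic groups} the quotient $H'/L_2\mathrm{St}_{H'}(n)$ is a direct product of $m-1$ cyclic groups of order $\ell$, where $\ell$ is the order of $ab$ modulo $H'\mathrm{St}_H(n-1)$. Hence
$$\log_m|H':L_2\mathrm{St}_{H'}(n)|=(m-1)\log_m\ell,$$
so the entire proposition reduces to showing $\log_m\ell=k_n-e_{n-1}\tau\log_m 2$. This first step is purely formal; the real content is the computation of $\log_m\ell$.

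The second step is to determine, for every level $N\ge 1$, the order $o_N$ of $ab$ modulo $H'\mathrm{St}_H(N)$. Since the subgroups $H'\mathrm{St}_H(N)$ are nested and decreasing we have $o_N\mid o_{N+1}$, and each $o_N$ is a power of $m$ (up to the factor $2$ arising in the even case). I would extract the exact value from \cref{proposition: orders of elements in abelianization m} by combining a membership statement (which bounds $o_N$ from above) with the matching non-membership statement at the neighbouring level (which bounds it from below). For $m$ odd, \textit{(P1)} gives $(ab)^{m^k}\in H'\mathrm{St}_H(2k)\setminus H'\mathrm{St}_H(2k+1)$, whence $o_{2k}=m^{k}$ and $o_{2k+1}=m^{k+1}$, i.e. $\log_m o_N=k_N+e_N$. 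For $m$ even, \textit{(Q1)} gives $(ab)^{m^{k+1}/2}\in H'\mathrm{St}_H(2k+1)\setminus H'\mathrm{St}_H(2k+2)$ and \textit{(R1)} gives $(ab)^{jm^{k}}\in H'\mathrm{St}_H(2k)\setminus H'\mathrm{St}_H(2k+1)$ for $j\not\equiv 0,m/2\pmod m$; together these force $o_{2k}=m^{k}$ and $o_{2k+1}=m^{k+1}/2$, i.e. $\log_m o_N=k_N+e_N-e_N\log_m 2$. Both cases are captured uniformly by the single formula $\log_m o_N=k_N+e_N(1-\tau\log_m 2)$.

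The final step is bookkeeping. Setting $N=n-1$ and invoking the identities $k_{n-1}=k_n-1+e_n$ and $e_{n-1}=1-e_n$ recorded just before the statement, one obtains $k_{n-1}+e_{n-1}=k_n$, so that
$$\log_m\ell=\log_m o_{n-1}=k_{n-1}+e_{n-1}(1-\tau\log_m 2)=k_n-e_{n-1}\tau\log_m 2.$$
Multiplying by $m-1$ gives the claimed value of $\log_m|H':L_2\mathrm{St}_{H'}(n)|$.

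The main obstacle is the middle step: upgrading divisibility bounds to exact orders. The delicate point in the even case is excluding every intermediate order $m^{k}d$ with $d$ a proper divisor of $m/2$; here I would use that, once $o_{2k}=m^k$ is known, the membership $(ab)^{jm^k}\in H'\mathrm{St}_H(2k+1)$ is equivalent to $d\mid j$, so that \textit{(R1)} — which excludes every $j\in\{1,\dots,m-1\}\setminus\{m/2\}$ — leaves $d=m/2$ as the only possibility. One must also treat $m=2$ separately, since then \textit{(R1)} and \textit{(R2)} are vacuous: the upper bound $o_{2k}\mid m^k$ must instead be read from \textit{(Q1)} through the inclusion $H'\mathrm{St}_H(2k+1)\subseteq H'\mathrm{St}_H(2k)$, and the resulting orders $o_{2k}=o_{2k+1}=2^{k}$ still agree with the unified formula above.
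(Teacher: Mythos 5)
Your proposal is correct and follows essentially the same route as the paper: reduce via \cref{Lemma: quotient direct product of finite cyclic groups} to the order of $ab$ modulo $H'\mathrm{St}_H(n-1)$, extract that order from \cref{proposition: orders of elements in abelianization m} case by case (odd, even $>2$, and $m=2$), and finish with the identity $k_n=k_{n-1}+e_{n-1}$. Your extra care in upgrading the divisibility/non-membership statements to exact orders (in particular ruling out intermediate divisors $d$ of $m/2$ via \textit{(R1)}) is a point the paper's proof passes over silently, but it is the same argument, not a different one.
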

\begin{proof}
In \cref{Lemma: quotient direct product of finite cyclic groups} we have shown that $H'/L_2\mathrm{St}_{H'}(n)$ is a direct product of $m-1$ cyclic groups of order $\ell$, where $\ell$ is the order of $ab$ modulo $H'\st_H(n-1)$. By \cref{proposition: orders of elements in abelianization m} for $m$ odd we have that modulo $H'\st_{H}(2k-1)$ and modulo $H'\st_{H}(2k)$ the element $ab$ has order $m^k$. Therefore the logarithmic order of $ab$ modulo $H'\mathrm{St}_H(n)$ is 
\begin{align*}
    k_n+e_n,
\end{align*}
for $n\ge 1$.
For $m>2$ even, modulo $H'\st_{H}(2k)$ the element $ab$ has order $m^k$, while modulo $H'\st_{H}(2k+1)$ the order of $ab$ is $m^{k+1}2^{-1}$. It follows that the logarithmic order of $ab$ modulo $H'\mathrm{St}_H(n)$ is 
\begin{align*}
    k_n+e_n-e_n\log_{m}2,
\end{align*}
for $n\ge 1$.
Lastly, for $m=2$ modulo $H'\st_{H}(2k)$ and modulo $H'\st_{H}(2k+1)$ the element $ab$ has order $m^k$. Hence, the logarithmic order of $ab$ modulo $H'\mathrm{St}_H(n)$ is
\begin{align*}
    k_n+e_n-e_n\log_{2}2=k_n,
\end{align*}
for $n\ge 1$. Therefore, for $m\geq 2$ and any $n\geq 1$ the logarithmic order of $ab$ modulo $H'\mathrm{St}_H(n)$ is 
$$k_n+e_n-e_n\tau\log_m 2,$$
and thus  
$$\log_m|H':L_2\st_{H'}(n)|=(m-1)(k_{n-1}+e_{n-1}-e_{n-1}\tau\log_m 2).$$
Hence the result follows from the equality $k_{n}=k_{n-1}+e_{n-1}$.
\end{proof}

Recall the formula for the sequence $\{s_n\}_{n\ge 1}$ of a  branching groups given in \cref{align: sn for branching}. 

\begin{lemma}
\label{lemma: sn para H'}
The sequence $\{s_n\}_{n\ge 1}$ for $H'$ is given by
\begin{align*}
    s_n=(m-1)\big((2e_n-1)\tau\log_m 2-e_{n}\big).
\end{align*}
\end{lemma}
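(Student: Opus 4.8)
The plan is to apply the branching formula \cref{align: sn for branching} directly to the branching group $G=H'$. Since $\psi^{-1}(H'\times\dotsb\times H')=L_2$ by the definition of $L_2$, that formula reads
\[
s_n=-\log_m|L_2\mathrm{St}_{H'}(n):L_2\mathrm{St}_{H'}(n+1)|.
\]
Because $\mathrm{St}_{H'}(n+1)\le\mathrm{St}_{H'}(n)$ we have the chain $L_2\mathrm{St}_{H'}(n+1)\le L_2\mathrm{St}_{H'}(n)\le H'$, so multiplicativity of indices rewrites this as a telescoping difference
\[
s_n=\log_m|H':L_2\mathrm{St}_{H'}(n)|-\log_m|H':L_2\mathrm{St}_{H'}(n+1)|.
\]

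Next I would substitute the closed form of \cref{proposition: order of H' over LSt(n)}, namely $\log_m|H':L_2\mathrm{St}_{H'}(n)|=(m-1)(k_n-e_{n-1}\tau\log_m 2)$. One small point must be addressed: that proposition is stated only for $n\ge 2$, whereas the present lemma asserts the value of $s_n$ for all $n\ge 1$, so computing $s_1$ also requires the case $n=1$. Here I would observe that $L_2\le\mathrm{St}(1)$ forces $L_2\mathrm{St}_{H'}(1)=\mathrm{St}_{H'}(1)=H'$, the last equality holding because $H/H'$ is abelian (\cref{proposition: H/H' free abelian of rank 2}), so $H'$ acts trivially on the first level; thus both the index and the right-hand side of the closed form vanish at $n=1$, and the formula is valid for every $n\ge 1$.

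It then remains to plug in and simplify. Writing out the difference gives
\[
s_n=(m-1)\bigl[(k_n-e_{n-1}\tau\log_m 2)-(k_{n+1}-e_n\tau\log_m 2)\bigr],
\]
and applying the recorded identities $k_{n+1}=k_n+e_n$ and $e_{n-1}=1-e_n$ collapses the constant part via $k_n-k_{n+1}=-e_n$ and the coefficient of $\tau\log_m 2$ via $e_n-e_{n-1}=2e_n-1$, yielding
\[
s_n=(m-1)\bigl((2e_n-1)\tau\log_m 2-e_n\bigr),
\]
as claimed.

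The computation is entirely routine; the only genuine care needed is the bookkeeping of the index shifts between $n-1$, $n$, and $n+1$ together with the verification of the boundary case $n=1$, which is precisely where a careless application of \cref{proposition: order of H' over LSt(n)} — valid a priori only for $n\ge 2$ — could go astray.
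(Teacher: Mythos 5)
Your argument is correct and is essentially the paper's own proof: telescope the branching formula \cref{align: sn for branching} (using that $\psi^{-1}(H'\times\dotsb\times H')=L_2$) and substitute the closed form of \cref{proposition: order of H' over LSt(n)}, then simplify with $k_{n+1}=k_n+e_n$ and $e_{n-1}=1-e_n$. Your extra check at $n=1$ is a welcome refinement, since the paper applies the $n\ge 2$ formula there without comment; the only correction is the justification of $\mathrm{St}_{H'}(1)=H'$ --- this holds because $H$ acts on the first level through the abelian group $\langle\sigma\rangle$, so $H'\le \mathrm{St}(1)$, and not because $H/H'$ is abelian, which is true for every group and says nothing about the level-one action.
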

\begin{proof}
Since $H'$ is branching, the result follows from \cref{proposition: order of H' over LSt(n)} and both the equalities  $k_{n+1}=k_n+e_n$ and $ e_{n-1}=1-e_n$ as
\begin{align*}
    s_n&=-\log_m|L_2\mathrm{St}_{H'}(n):L_2\mathrm{St}_{H'}(n+1)|\\
    &=-\log_m|H':L_2\mathrm{St}_{H'}(n+1)|+\log_m|H':L_2\mathrm{St}_{H'}(n)|\\
    &=-(m-1)(k_{n+1}-e_{n}\tau\log_{m}2)+(m-1)(k_n-e_{n-1}\tau\log_{m}2)\\
    &=(m-1)(k_n-k_{n+1}+(e_{n}-e_{n-1})\tau\log_{m}2)\\
    &=(m-1)(-e_n+(2e_{n}-1)\tau\log_{m}2).\qedhere
\end{align*}
\end{proof}

\begin{proof}[Proof of \cref{Theorem: Hausdorff dimension and congruence orders for m}]
By \cref{proposition: G and G' same hausdorff dimension}, we get $\mathrm{hdim}_{\Gamma_m}(\overline{H})=\mathrm{hdim}_{\Gamma_m}(\overline{H'})$. Since the group $H'$ is branching, using \cref{lemma: sn para H'}, we may compute its Hausdorff dimension as 
\begin{align*}
    \mathrm{hdim}_{\Gamma_m}(\overline{H'})&=\log_m|H':\mathrm{St}_{H'}(1)|-S_{H'}(1/m)=-\sum_{n=1}^\infty\frac{s_n}{m^n}\qquad\qquad\quad\\
    &=-\sum_{n=1}^\infty\frac{(m-1)\big((2e_n-1)\tau\log_{m}2-e_n\big)}{m^n}\\
\end{align*}
\begin{align*}
    \qquad\qquad\qquad\quad&=-(m-1)\left(\sum_{n=1}^\infty\frac{(2e_n-1)\tau\log_{m}2-e_n}{m^n}\right)\\
    &=(m-1)\left(\sum_{n=1}^\infty\frac{e_n}{m^n}-\tau(\log_{m}2)\left(2\sum_{n=1}^\infty\frac{e_n}{m^n}-\sum_{n=1}^\infty\frac{1}{m^n}\right)\right)\\
    &=(m-1)\left(\sum_{n=1}^\infty m^{1-2n}-\tau(\log_{m}2)\left(2\sum_{n=1}^\infty m^{1-2n}-\sum_{n=1}^\infty m^{-n}\right)\right)\\
    &=(m-1)\left(\frac{m}{m^2-1}-\tau(\log_{m}2)\left(2\frac{m}{m^2-1}-\frac{1}{m-1}\right)\right)\\
    &=\frac{m-\tau(m-1)\log_m 2}{m+1}.\qedhere
\end{align*} 
\end{proof}

Note that for $m$ even and not a power of 2, the real number $\log_m 2$ is irrational (otherwise $\log_m 2=a/b$ being rational would imply that $m^a=2^b$ which is impossible as $m$ contains a prime factor different from 2 by assumption). Furthermore, as $m^{\log_m 2}=2$ is rational the irrational number $\log_m 2$ must be transcendental by the Gelfond-Schneider Theorem.

\bibliographystyle{unsrt}

\typeout{get arXiv to do 4 passes: Label(s) may have changed. Rerun}

\end{document}